\newtheorem{theorem}{Theorem}[section]
\newtheorem{lemma}[theorem]{Lemma}
\newtheorem{corollary}{Corollary}[section]
\newtheorem{proposition}{Proposition}[section]
\theoremstyle{remark}
\newtheorem{definition}[theorem]{Definition}
\newtheorem{example}{Example}[section]
\newtheorem{remark}{Remark}[section]
\begin{document}

\begin{frontmatter}
\title{Geometric Conditions for the Discrepant Posterior Phenomenon 
and Connections to Simpson's Paradox}
\runtitle{Discrepant Posterior Phenomenon}

\begin{aug}
\author[A]{\fnms{Yang} \snm{Chen}
\ead[label=e1]{ychenang@umich.edu}},
\author[B]{\fnms{Ruobin} \snm{Gong}\ead[label=e2]{ruobin.gong@rutgers.edu}}
\and
\author[C]{\fnms{Min-ge} \snm{Xie}\ead[label=e3]{mxie@stat.rutgers.edu}}


\address[A]{Yang Chen is Assistant Professor, Department of Statistics and the Michigan Institute for Data Sciences (MIDAS), University of Michigan, Ann Arbor, USA \printead{e1}.}

\address[B]{Ruobin Gong is Assistant Professor, Department of Statistics, Rutgers University – New Brunswick, USA \printead{e2}.}

\address[C]{Min-ge Xie is Distinguished Professor,
Department of Statistics, Rutgers University – New Brunswick, USA \printead{e3}. The research is supported in part by NSF research grants: DMS1811083, 
 DMS1812048, DMS2015373 and DMS2027855.}

\end{aug}

\begin{abstract}
The {\it discrepant posterior phenomenon} (DPP) is a counter-intuitive phenomenon that can frequently occur in a Bayesian analysis of multivariate parameters. It refers to the phenomenon that 
a parameter estimate based on a 
posterior is more extreme than both of those inferred based on either the prior or the likelihood alone. Inferential claims that exhibit DPP defy the common intuition that the posterior is a prior-data compromise, and the phenomenon can be surprisingly ubiquitous in well-behaved Bayesian models. In this paper we revisit this phenomenon and, using point estimation as an example, derive conditions under which the DPP occurs in Bayesian models with exponential quadratic likelihoods
and conjugate multivariate Gaussian priors. The family of exponential quadratic likelihood models includes Gaussian models and those models with local asymptotic normality property. 
We provide an intuitive geometric interpretation of the phenomenon and show that there exists a nontrivial space of marginal directions such that the DPP occurs. We further relate the phenomenon to the Simpson's paradox and discover their deep-rooted connection that is associated with marginalization. 
We also draw connections with Bayesian computational algorithms when difficult geometry exists. Our discovery demonstrates that DPP is more prevalent than previously understood and anticipated.  Theoretical results are complemented by numerical illustrations. Scenarios covered in this study have implications for parameterization, sensitivity analysis, and prior choice for Bayesian modeling. 
\end{abstract}

\begin{keyword}
\kwd{Bayesian analysis}
\kwd{marginalization}
\kwd{prior-data conflict}
\kwd{Simpson's paradox}
\kwd{informative prior}
\end{keyword}

\end{frontmatter}

\section{Introduction}

In Bayesian analysis, the posterior distribution provides a probabilistic summary that incorporates both the prior knowledge and what can be learned from data. 
Bayesian inferential statements on model parameters are derived solely from the posterior distribution. In many applications for which the model parameter is multi-dimensional, we are only interested in inference about a certain marginal parameter, say $\eta$, where $\boldsymbol{\theta} = (\eta^\top, \alpha^\top)^\top $ is the full model parameter of dimension $d > 1$, and $\alpha$ is the nuisance parameter. For such inference problems, 
the Bayesian approach typically assigns a prior to the full parameter $\boldsymbol{\theta}$, and an estimate of the target parameter $\eta$ obtained from the marginal posterior of $\eta$ (cf., e.g., \citealt{efron1986isn,wasserman2007isn}). This Bayesian inference approach is coherent and supported by probability theory, and it has been extensively used in practice; see discussions on multi-parameter models in~\citet{gelman2013bayesian} and references therein. However, if inference is about the marginal parameter $\eta$, we show in this note that the so-called {\it discrepant posterior phenomenon} (DPP) may occur frequently. 


Bayesian posterior inference is often viewed as a combination of information from the prior and likelihood. Thus, we generally expect it to be a compromise between the two. Estimates based on the posterior are expected to be more moderate than either of the corresponding estimates from the prior or the likelihood. For example, in a Gaussian conjugate model with {an unknown mean parameter of interest} and known variance, the posterior mean is a weighted average of the prior mean and the maximum likelihood estimate (MLE). Therefore, the posterior estimate lies between the estimates based on the prior and the likelihood. What is lesser known is that, when we have multiple model parameters (parameter of interest plus nuisance parameters) and the prior is informative, the practice of marginalizing a full Bayesian posterior to the parameter of interest can lead to counter-intuitive posterior inference. The DPP occurs when an estimate derived from the (marginal) posterior takes a value that is more extreme than those based on either the prior or the data. That is, 
the estimate of $\eta$ derived from the posterior is more extreme than both of those inferred based on either the prior or the likelihood alone. 
The phenomenon is counterintuitive, for it defeats the general expectation of the posterior as a prior-data compromise for the target parameter. We investigate this phenomenon, and reveal that DPP bears a structural resemblance to the Simpson's paradox.  

\subsection{Literature on DPP}
The DPP was first reported in \citet{xie2013incorporating} in a study of a Binomial clinical trial conducted by Johnson \& Johnson Inc. Both expert opinions (forming an informative prior) and data from the clinical trial agreed that the improvement $\eta = p_1 - p_0$, from the control success rate ($p_0$) to the treatment success rate ($p_1$), is around $10\%$. However, the marginal posteriors of $\eta$ from several candidate full Bayesian models on $(p_0, p_1)$ suggested that the improvement is over $20\%$ \cite[Table 3]{xie2013incorporating}.
Figure 2 therein shows that the marginal posterior of $\eta$ peaks outside of the marginal prior distribution of $\eta$ and the profile likelihood function of $\int L(p_0, p_0 + \eta | {\rm data}) d p_0$. Here, $L(p_0, p_1 | {\rm data})$
is the joint likelihood of $(p_0, p_1)$, and the marginal prior distribution of $\eta$ and the profile likelihood function are more or less in agreement. 
\cite{xie2013incorporating} has also explored  different prior models, including  independent, dependent and hierarchical bivariate Beta priors, for the full parameter $\boldsymbol{\theta} = (p_0, p_1)^\top$, but similar DPP occurs.



The DPP is not a mathematical oversight. Provided that both the prior and likelihood specifications are correct and the prior is proper, the Bayes calculation esures that conclusions based on the marginal posterior of $\eta$ are necessarily correct, whether or not DPP is present. Practically, however, DPP can lead to undesirable complications. For instance, in the example from \citet{xie2013incorporating}, should we trust the conclusion that the improvement $p_1 - p_0$ is over $20\%$ based on the marginal posterior of $\eta$? Many may choose to question the prior 
or the data model specifications. However, as we will see in later sections, the phenomenon is surprisingly ubiquitous in  well-behaved Bayesian models. 
Subsequent discussions in~\citet[Section 6.2]{xie2013confidence}, \citet{robert2013} and \citet{xie2013confidence_rejoinder} suggest that the DPP is commonplace in multivariate Bayesian analysis. 
The observation generated further discussions on whether it is necessary to require some alignment of the prior given the likelihood, which in turn raised questions and disagreement about whether data-dependent priors should be used. 


\subsection{Our Key Contributions}
In this paper, we revisit the DPP phenomenon and provide under a linear parameter setting a set of explicit conditions under which DPP occurs. We also present a clear connection for this Bayesian phenomenon to Simpson's paradox that is often discussed under frequentist contexts.  
Specifically, to be
precise 
mathematically 
but without loss of generality, 
we 
study DPP using point estimation and consider the  parameter of interest a linear combination of the full parameter $\boldsymbol{\theta}$, $\eta = \boldsymbol{\lambda}^{\top} \boldsymbol{\theta}$, for a given $\boldsymbol{\lambda} \in \mathbb{R}^d$, $d>1$. 
In particular, technical examinations of the DPP is performed for 
a class 
of models,
in which observation ${\bf y}$ are assumed to exhibit an exponential quadratic likelihood. That is, $L(\boldsymbol{\theta}; {\bf y}) \propto \exp\left\{- q(\boldsymbol{\theta})\right\}$, where $q(\boldsymbol{\theta})$ is a quadratic function of $\boldsymbol{\theta}$.
The family of the exponential quadratic likelihood includes Gaussian models and also those with local asymptotic normality (LAN) property as special cases. For the ease of presentation in this study, 
the prior distribution is assumed to be fully specified as a multivariate Gaussian distribution, which is a conjugate for the Gaussian and LAN likelihood. 
We demonstrate that DPP is more prevalent than previously anticipated, supported by theoretical results on the probability that DPP occurs and also accompanied by its numerical estimation using simulation experiments. Furthermore, we show that heterogeneous variances on the independent component dimensions of the parameter can result in DPP, demonstrating that DPP is not a result of correlations among parameters due to ``complex'' geometry. 
Moreover, the connection to Simpson's paradox is first discovered in this article. The discovery revealed that both mind-boggling phenomena share the same underlying mathematical structure, although the entities involved in the DPP phenomenon are point estimators based on the prior distribution and the likelihood function, while the typical case of Simpson's paradox are on different groups of data.

We conducted extensive numerical study under both the Gaussian conjugate model and the Binomial model with various prior choices, where the focus is on marginal parameters that are linear contrasts of the full model parameter. Under the Gaussian conjugate model setting, we find that (I) 
DPP can occur for a reasonably large sample size,
but the probability that DPP occurs decreases as the sample size increases when the prior fixed; (II) DPP can occur in both settings of correlated and uncorrelated parameters, but models with correlated parameters are more prone to DPP, compared to their counterparts with uncorrelated parameters;
and (III) misalignment between the prior mean and the MLE exacerbates DPP, even more so in models with correlated parameters.  Under the Binomial model setting when the quantity of interest is the linear contrast of two probabilities, we find that (I) a positive prior correlation between the two proportion parameters exacerbates the DPP, whereas a negative prior correlation alleviates it; (II) An increasingly larger prior variance, i.e. flatter relative to the likelihood, tend to alleviate the DPP; and (III) the DPP could be avoided if the prior variances were specified with unknown hyperparameters to accord with the data. Finally, although the DPP in nonlinear non-Gaussian models can behave differently from the exponential quadratic models, the key points discovered still has implications for the general setting.


These results provide a fuller picture and further understanding of the DPP, including a connection to Simpson's paradox. 
The development can provide intuitions on how to mitigate and interpret the DPP in the presented cases. It can serve as precautions for practitioners of Bayesian inference when highly informative priors are desired (e.g. in astrophysics applications~\citep{chen2019calibration}, in meta analysis in medical sciences~\citep{rhodes2016implementing}, in computational linguistics~\citep{lapata2004verb}, and in cognitive modeling~\citep{lee2018determining}); and more importantly provide practical guidance towards prior specification (including dispersed/weakly informative priors, hierarchical priors, and re-parameterization for prior-likelihood curvature alignment, see Sections 2 and 5 of \citet{gelman2013bayesian} for more detailed discussions and examples) for Bayesian inference, to mitigate and possibly avoid the DPP. This recommendation aligns with the idea promoted in \citet{gelman2017prior}, ``a prior can in general only be interpreted in the context of the likelihood with which it will be paired''. Furthermore, we believe that models that demonstrate the DPP shall be adopted as test cases for works that seek to form objective~\citep{berger2015overall} or non-informative~\citep{yang1996catalog} priors, works on prior sensitivity checks~\citep{berger1990robust}, and works on quantifying prior influences~\citep{reimherr2014being,jones2020quantifying}. And this is due to the fact that the DPP defeats intuitions on prior-likelihood interactions thus corresponding models can reveal properties of various prior specification/quantification methods.

\subsection{Outline}

The remainder of the paper is organized as follows.  Section~\ref{sec:DPP_existence} provides a precise definition of DPP considered in this article, with brief discussions on the prevalence of DPP.  Notably, with the exception of certain special cases, there always exist certain marginal directions along which DPP occurs. Section~\ref{section:conditions_DPP} derives specific conditions under which DPP occurs (and does not occur) in a model from the family of exponential quadratic likelihood, accompanied by numerical examples to illustrate the prevalence of DPP and the theoretical conditions.  Section~\ref{section:geometry} offers geometric and intuitive interpretations of the conditions for which the DPP occurs or not, and establishes a connection between DPP and the Simpson's paradox in the model space. We revisit the Binomial example given in~\citet{xie2013incorporating} in Section~\ref{section:binomialmodel}. In the highly nonlinear model, the DPP phenomenon is more complicated than the Gaussian case. Instead of giving analytical solutions for DPP, we examine from both theoretical and numerical perspectives the DPP for the Binomial model. Finally, we conclude the paper in Section~\ref{section:summary} with discussions on the DPP and its implications for parametrization, sensitivity analysis, and choice of priors in Bayesian analysis.


\section{Definition and Existence of DPP}
\label{sec:DPP_existence} 

\subsection{A Point-wise Definition of DPP}
The DPP occurs whenever an estimate from the marginal posterior does not lie between the corresponding estimates from the prior and the likelihood. In Figure~\ref{fig:dpp_beta_binomial_illustration}, we illustrate the DPP in a simple two-dimensional Beta-Binomial conjugate model, where the quantity of interest is a one-dimensional contrast, the same as the example studied in \citet{xie2013incorporating}. In this numerical experiment, we assume that the model is $x_j\sim {\rm Binomial}(n_j, p_j)$ independently for $j=0, 1$; and the priors are $p_j\sim {\rm Beta}(a_j, b_j)$ independently for $j=0, 1$; where $x_0=30, n_0=70$, $x_1=30, n_1=60$, $a_0=15, b_0=4$, $a_1=45, b_1=7$. The quantity of interest is $\eta = p_1-p_0$. As can be seen from the marginal densities in Figure~\ref{fig:dpp_beta_binomial_illustration}, the information for $\eta = p_1-p_0$ is consistent in the prior and the profile likelihood functions since the curves obtained by projections to the direction of  $\eta = p_1-p_0$ are almost identical, but the marginal posterior distribution is quite different.  
If we consider point estimators,
the marginal posterior mode of $\eta$ is {$\eta^p = 0.17$}, which is located to the right of that of both its prior and the likelihood {(i.e., $\eta^\pi = 0.06$ and $\eta^L = 0.07$)}, respectively.

To understand the essence underlying DPP and also simplify our presentations in 
explicit and precise mathematical forms, we restrict our attention to a point-wise definition of DPP. 
Specifically, we assume our parameter of interest is $\eta = \boldsymbol{\lambda}^{\top} \boldsymbol{\theta}$, where $\boldsymbol{\theta} \in \Theta$ is the full model parameter of dimension $d > 1$, and $\Theta$ is a non-degenerate continuous subspace of $\mathbb{R}^d$. Denote the MLE of $\boldsymbol{\theta}$ by $\boldsymbol{\mu}^L$, the prior mean of $\boldsymbol{\theta}$ by $\boldsymbol{\mu}^{\pi}$, and the posterior estimate of $\boldsymbol{\theta}$ by $\boldsymbol{\mu}^{p}$. Denote $\eta^p = \boldsymbol{\lambda}^{\top}\boldsymbol{\mu}^{p}$ as the posterior point estimate of $\eta$ (mean, median, or mode), $\eta^{\pi}$ the prior mean of $\eta$, and let $\eta^L$ be an estimate of $\eta$ derived from the likelihood function, such as the MLE. A formal point-wise definition based on MLE and prior and posterior means is given below. 

\begin{definition}[Point-wise DPP on posterior mean] \label{def:dpp}
We say that the {\it discrepant posterior phenomenon} (DPP) occurs, if 
\begin{equation}
    \left(\eta^p - {\eta}^{\pi}\right)\left(\eta^p - {\eta}^{L}\right)> 0. \label{eqn:DPP_eqn}
\end{equation}
\end{definition}

We consider $\eta^p$ to be either the posterior mean or posterior mode in this paper. One may wish to also define the discrepant posterior phenomenon for more general types of estimators, such as point estimators other than expectations and the MLE, as well as interval estimators. To maintain clarity of the current paper, we defer discussions about alternative definitions of DPP to future work, noting here that such definitions are conceivable. See also Section~\ref{section:summary} for further discussions.

\begin{figure}[ht]
    \centering
    \includegraphics[width=0.9\textwidth, height=70mm]{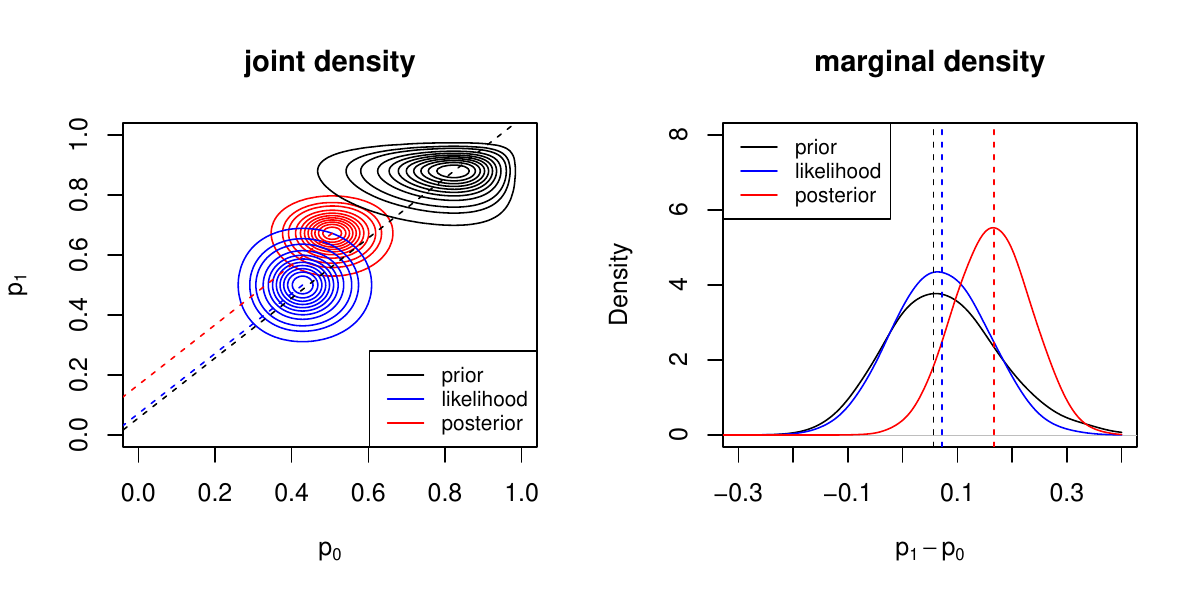}
   \caption{A demonstration of DPP with a two dimensional Beta-Binomial model where the quantity of interest is $\eta = p_1-p_0$, the priors for $p_0$ and $p_1$ are independent Beta distributions with degrees of freedom $(15, 4)$ and $(45, 7)$ respectively, and the data consist of $30$ out of $70$ 'successes' and $30$ out of $60$ 'successes' in independent Bernoulli experiments with 'success probabilities' $p_0$ and $p_1$ respectively. On the left panel, we show contour plots of the joint prior $\pi(p_0, p_1)$, likelihood function $L(p_0, p_1|{\rm data})$, and posterior function $p(p_0, p_1|{\rm data})$. On the right panel, we show projections (marginals) of $\pi(p_0, p_1)$,$L(p_0, p_1|{\rm data})$, and $p(p_0, p_1|{\rm data})$ onto the direction of $\eta = p_1 - p_0$. In both panels, the black, blue, and red contours/curves correspond to the prior, likelihood, and posterior densities respectively.
    }
    \label{fig:dpp_beta_binomial_illustration}
\end{figure}

\subsection{Prevalence of DPP: the First Look}
Let us briefly discuss the existence of DPP from the definition; later sections expand on the geometric illustrations of DPP. 
Denote the $d\times 3$ matrix $\boldsymbol{M} = (\boldsymbol{\mu}^{p}, \boldsymbol{\mu}^{\pi},\boldsymbol{\mu}^L)$, which collects the posterior, prior and likelihood point estimates, and the $1\times 3$ vector $\boldsymbol{\zeta}^\top = \boldsymbol{\lambda}^\top \boldsymbol{M} = (
\eta^p, \eta^\pi, \eta^L) $. We will show with a simple argument of analytic geometry and linear algebra that, for any given combination of linearly independent point estimates of the posterior, prior and likelihood, i.e. $(\boldsymbol{\mu}^p,\boldsymbol{\mu}^\pi,\boldsymbol{\mu}^L)$, there exists a nontrivial (i.e. non-degenerate with a positive volume) subspace of $\mathbb{R}^d$ such that for any $\boldsymbol{\lambda}$ that takes values in this subspace, the DPP appears for $\eta = \boldsymbol{\lambda}^\top\boldsymbol{\mu}$. 

{When $d \ge 3$, for a given $\boldsymbol{M}$, any value $\boldsymbol{\lambda}$ that makes $\boldsymbol{\zeta}^\top=\boldsymbol{\lambda}^\top\boldsymbol{M}$ satisfy the inequality~\eqref{eqn:DPP_eqn} results in DPP. If $d > 3$, the values of $\boldsymbol{\zeta}$ that satisfy the inequality~\eqref{eqn:DPP_eqn} span a nontrivial subspace of $\mathbb{R}^3$.
If $\boldsymbol{\mu}^{\pi}, \boldsymbol{\mu}^L$ and $\boldsymbol{\mu}^p$ are not collinear,
the equation $\boldsymbol{\zeta} = \boldsymbol{M}^{\top}\boldsymbol{\lambda}$ asserts three linearly independent constraints on  $\mathbb{R}^d$. Then for each $\boldsymbol{\zeta}$ that satisfies~\eqref{eqn:DPP_eqn}, there exists a subspace of dimension $d-3$ in which each $\boldsymbol{\lambda}$ value is a solution to the equation  $\boldsymbol{\zeta} = \boldsymbol{M}^{\top}\boldsymbol{\lambda}$. On the other hand, if $d=3$ and $M$ is of full rank,  for each $\boldsymbol{\zeta}$ that satisfies~\eqref{eqn:DPP_eqn}, the equation $\boldsymbol{\zeta}=\boldsymbol{M}^\top \boldsymbol{\lambda}$ has a unique solution of $\boldsymbol{\lambda}\in\mathbb{R}^3$. 
Thus, for any linearly independent combination of $\boldsymbol{\mu}^p,\boldsymbol{\mu}^\pi,\boldsymbol{\mu}^L$, DPP would occur for $\boldsymbol{\lambda}$ values in a {nontrivial} subspace of $\mathbb{R}^d$, showing prevalence of this phenomenon.

When $d=2$ and $\boldsymbol{M}$ is of full rank,
we consider the $1\times 2$ vector $\boldsymbol{\zeta}^\top \boldsymbol{M}^\top = \boldsymbol{\lambda}^\top \left[\boldsymbol{M}\boldsymbol{M}^\top\right]$. Each $\boldsymbol{\zeta}$ that satisfies the inequality~\eqref{eqn:DPP_eqn} has a unique $\boldsymbol{\lambda}$ that corresponds to it.
As all values of $\boldsymbol{\zeta}^\top\boldsymbol{M}^\top$ span a {nontrivial} subspace of $\mathbb{R}^2$, their corresponding $\boldsymbol{\lambda}$ values span a {nontrivial} subspace of $\mathbb{R}^2$ as well, all of which resulting in DPP. The case of $\boldsymbol{M}$ having rank less than or equal to $1$ corresponds to when $\boldsymbol{\mu}^p$, $\boldsymbol{\mu}^\pi$, and $\boldsymbol{\mu}^L$ lie on the same line or point. It is easy to check using the definition that DPP does not occur in this case.




Denote by ${\cal S} = \{(t_1, t_2, t_3)^\top: (t_1 - t_2)(t_1 - t_3) >0 \} \subset \mathbb{R}^3$. By Definition~\ref{def:dpp},  DPP occurs if and only if
$\boldsymbol{\zeta} \in {\cal S}$.  
When $d \ge 3$ and if $\boldsymbol{\mu}^{\pi}, \boldsymbol{\mu}^L$ and $\boldsymbol{\mu}^p$ are not collinear, the solution of $\boldsymbol{\lambda}$ in the equation $\boldsymbol{\zeta}=\boldsymbol{M}^\top\boldsymbol{\lambda}$, for any $\boldsymbol{\zeta} \in {\cal S}$, always exists and forms a non-trivial subspace of $\mathbb{R}^d$, i.e., set ${\cal D} = \{\boldsymbol{\lambda}: \boldsymbol{\zeta}=\boldsymbol{M}^\top\boldsymbol{\lambda}, \text{ for } \boldsymbol{\zeta} \in {\cal S}\}$ is a non-trivial subspace in $\mathbb{R}^d$.
Any direction $ \boldsymbol{\lambda} \in {\cal D}$, DPP occurs.
Thus, for any linearly independent combination of $\boldsymbol{\mu}^p,\boldsymbol{\mu}^\pi,\boldsymbol{\mu}^L$, DPP would occur for $\boldsymbol{\lambda}$ values in a {nontrivial} subspace of $\mathbb{R}^d$, $d \ge 3$, showing prevalence of this phenomenon.

When $d=2$ and $\boldsymbol{M}$ is of full rank,
we consider 
the equation $ \boldsymbol{M} \boldsymbol{\zeta} =  \boldsymbol{M}\boldsymbol{M}^\top \boldsymbol{\lambda}$, for any $\boldsymbol{\zeta} \in {\cal S}$, which has a unique solution
$\boldsymbol{\lambda} = \left[\boldsymbol{M}\boldsymbol{M}^\top\right]^{-1} \boldsymbol{M} \boldsymbol{\zeta}$. 
In this case, we define ${\cal D} = \{\boldsymbol{\lambda}:  \boldsymbol{M} \boldsymbol{\zeta} =  \boldsymbol{M}\boldsymbol{M}^\top \boldsymbol{\lambda}, \text{ for } \boldsymbol{\zeta} \in {\cal S}\}$. Again, we can see that ${\cal D}$ is a nontrivial subspace in $\mathbb{R}^d$ with $d =2$, showing that DPP is prevalent.
We remark that the case that $\boldsymbol{M}$ is not full rank (with rank less than or equal to one) corresponds to the setting that $\boldsymbol{\mu}^p$, $\boldsymbol{\mu}^\pi$, and $\boldsymbol{\mu}^L$ lie on the same line or are the same point. Only in this case, DPP does not occur.  
}

Although a consequence of probabilistic calculations, at the crux of the DPP lies a puzzle of geometry. And all of the reasoning for prevalence of DPP above can be explained in highly intuitive ways from a geometric perspective in Section~\ref{section:geometry}. 





\section{Conditions for DPP in Exponential-Quadratic Likelihoods}
\label{section:conditions_DPP}

\subsection{Theoretical Results}\label{sec:31}

In this section, we investigate conditions under which DPP occurs for models with multivariate Gaussian priors and exponential-quadratic likelihoods. The latter can be regarded as the asymptotic likelihood in large samples; see
Lemma~\ref{lemma:asymptotic_likelihood}.

We adopt the following notation for the remainder of this section. Let the prior for $\boldsymbol{\theta}\in\mathbb{R}^d$ be 
$\pi(\boldsymbol{\theta} ) = \phi(\boldsymbol{\theta}; \boldsymbol{\mu}^{\pi}, \Sigma^{\pi})$ and the likelihood be proportional to 
$L(\boldsymbol{\theta} | data ) \propto \phi(\boldsymbol{\theta}; \boldsymbol{\mu}^{L}, \Sigma^{L})$,  where $\phi (\cdot; \boldsymbol{\mu}, \Sigma)$ denotes a multivariate Gaussian density with mean $\boldsymbol{\mu}$ and variance $\Sigma$. In this case, it is easy to derive that the posterior distribution of $\boldsymbol{\theta}$ is Gaussian, with mean and variance-covariance matrix denoted by $\boldsymbol{\mu}^{p}$ and $\Sigma^{p}$ respectively. Suppose the parameter of interest is a linear margin $\eta = \boldsymbol{\lambda}^{\top} \boldsymbol{\theta}$, for a given $\boldsymbol{\lambda}\in\mathbb{R}^d$. In what follows, Lemma~\ref{lemma:asymptotic_likelihood} gives two examples of exponential-quadratic likelihoods: one is an exact exponential-quadractic likelihood from independently and identically distributed (i.i.d.) {multivariate} Gaussian observations with unknown mean and known variance, which can be easily adapted to simple linear regression models with unknown regression coefficient and known variance. The other is an asymptotically Gaussian likelihood based on the theory of local asymptotic normality (LAN). We summarize these known results in Lemma~\ref{lemma:asymptotic_likelihood}. This gives concrete examples of exponential-quadractic likelihoods, establishes the notation, and showcases the extent of generality of our analysis.

\begin{lemma} (a) [Gaussian Population]
\label{lemma:asymptotic_likelihood}
Let $d \times 1$ random sample vector ${\bf y}_i\stackrel{\rm i.i.d.}{\sim}\mathcal{N}(\boldsymbol{\theta}, \Lambda)$, $1\leq i\leq n$. Assume that $\Lambda$ is known and $\boldsymbol{\theta}$ is the unknown parameter. 
Then the likelihood is proportional to $L(\boldsymbol{\theta} |{\bf y}) \propto \phi(\boldsymbol{\theta}; \boldsymbol{\mu}^{L}, \Sigma^{L})$ where $\phi$ denotes Gaussian density and $\boldsymbol{\mu}^L = \overline{\bf y}_n = \sum_{i=1}^n {\bf y}_i / n$, $\Sigma^L = \Lambda/n$, 

(b) [Local Asymptotic Normality (LAN)]
Let ${\bf y}_i\stackrel{\rm i.i.d.}{\sim} f(\cdot|\boldsymbol{\theta})$, $1\leq i\leq n$, where $\boldsymbol{\theta}\in\mathbb{R}^d$ is the unknown parameter and $f(\cdot|\boldsymbol{\theta})$ is the density function with regularity conditions given in~\citet[Chapter~6]{le2012asymptotics}. Let $\boldsymbol{\theta}_0$ be the true value of $\boldsymbol{\theta}$. Then in an open neighborhood of $\boldsymbol{\theta}_0$ of radius $O(n^{-1/2})$, with probability converging to $1$ as $n\rightarrow\infty$, the likelihood $L(\boldsymbol{\theta} |{\bf y}) = \prod_{i=1}^n f({\bf y}_i|\boldsymbol{\theta})$, as a function of $\theta$, is proportional to
$L(\boldsymbol{\theta} |{\bf y}) \propto \phi(\boldsymbol{\theta};\boldsymbol{\mu}^L,\Sigma^L)$ for some 
$\boldsymbol{\mu}^L,\Sigma^L$ that only depend on the data and $\boldsymbol{\theta}_0$. 
\end{lemma}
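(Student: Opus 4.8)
The plan is to treat the two parts in sequence, each reducing to a quadratic-form manipulation, with part (b) resting on the standard local asymptotic normality expansion.

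For part (a), I would start from the Gaussian log-likelihood $\ell_n(\boldsymbol{\theta}) = -\tfrac12\sum_{i=1}^n(\mathbf{y}_i-\boldsymbol{\theta})^\top\Lambda^{-1}(\mathbf{y}_i-\boldsymbol{\theta})$ plus a term free of $\boldsymbol{\theta}$, and apply the usual sum-of-squares decomposition: writing $\mathbf{y}_i-\boldsymbol{\theta}=(\mathbf{y}_i-\overline{\mathbf{y}}_n)+(\overline{\mathbf{y}}_n-\boldsymbol{\theta})$, the cross term $\sum_i(\mathbf{y}_i-\overline{\mathbf{y}}_n)^\top\Lambda^{-1}(\overline{\mathbf{y}}_n-\boldsymbol{\theta})$ vanishes since $\sum_i(\mathbf{y}_i-\overline{\mathbf{y}}_n)=\mathbf{0}$. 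Hence, as a function of $\boldsymbol{\theta}$,
\[
\ell_n(\boldsymbol{\theta}) = -\tfrac{n}{2}(\boldsymbol{\theta}-\overline{\mathbf{y}}_n)^\top\Lambda^{-1}(\boldsymbol{\theta}-\overline{\mathbf{y}}_n) + \text{const},
\]
which is $\log\phi(\boldsymbol{\theta};\overline{\mathbf{y}}_n,\Lambda/n)$ up to an additive constant, giving $\boldsymbol{\mu}^L=\overline{\mathbf{y}}_n$ and $\Sigma^L=\Lambda/n$. The same computation with $\overline{\mathbf{y}}_n$ replaced by the least-squares estimator and $\Lambda/n$ by the corresponding design-based covariance handles the simple linear regression case with known error variance, so I would state that adaptation in one line.

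For part (b), I would invoke the LAN expansion under the regularity (differentiability-in-quadratic-mean) conditions of \citet[Chapter~6]{le2012asymptotics}: for $\boldsymbol{\theta}$ in a neighborhood of $\boldsymbol{\theta}_0$ of radius $O(n^{-1/2})$, writing $h=\sqrt{n}(\boldsymbol{\theta}-\boldsymbol{\theta}_0)$, $\Delta_n=n^{-1/2}\sum_{i=1}^n\dot{\ell}_{\boldsymbol{\theta}_0}(\mathbf{y}_i)$ for the normalized score and $I(\boldsymbol{\theta}_0)$ for the Fisher information,
\[
\log\frac{L(\boldsymbol{\theta}\mid\mathbf{y})}{L(\boldsymbol{\theta}_0\mid\mathbf{y})} = h^\top\Delta_n - \tfrac12\,h^\top I(\boldsymbol{\theta}_0)\,h + R_n(\boldsymbol{\theta}), \qquad \sup_{\boldsymbol{\theta}}|R_n(\boldsymbol{\theta})|\xrightarrow{P}0
\]
over the neighborhood. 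Completing the square in the leading quadratic rewrites it as $-\tfrac{n}{2}(\boldsymbol{\theta}-\boldsymbol{\mu}^L)^\top I(\boldsymbol{\theta}_0)(\boldsymbol{\theta}-\boldsymbol{\mu}^L)$ plus a term free of $\boldsymbol{\theta}$, with $\boldsymbol{\mu}^L=\boldsymbol{\theta}_0+n^{-1/2}I(\boldsymbol{\theta}_0)^{-1}\Delta_n$ and $\Sigma^L=(nI(\boldsymbol{\theta}_0))^{-1}$, both of which depend only on the data and $\boldsymbol{\theta}_0$. Thus on the neighborhood $L(\boldsymbol{\theta}\mid\mathbf{y}) = c_n\,\phi(\boldsymbol{\theta};\boldsymbol{\mu}^L,\Sigma^L)\,e^{R_n(\boldsymbol{\theta})}$ with $c_n$ free of $\boldsymbol{\theta}$, i.e.\ proportional to a Gaussian density up to the factor $e^{R_n(\boldsymbol{\theta})}=1+o_P(1)$ that is uniformly negligible on the neighborhood.

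The main obstacle is not any hard estimate — both parts are bookkeeping on top of a classical theorem — but rather stating precisely the sense in which the LAN likelihood ``is proportional to'' $\phi(\boldsymbol{\theta};\boldsymbol{\mu}^L,\Sigma^L)$: the proportionality is only approximate, holding up to $e^{R_n}$, and $R_n$ is controlled uniformly only over bounded sets in the local coordinate $h=\sqrt{n}(\boldsymbol{\theta}-\boldsymbol{\theta}_0)$, which is exactly why the claim is confined to a $O(n^{-1/2})$ neighborhood and why it holds only with probability tending to one. I would make sure the lemma's statement and its downstream uses rely only on this local, approximate sense, and note that combining the expansion with consistency and asymptotic efficiency of the MLE (as used elsewhere in the paper) lets one replace $\boldsymbol{\mu}^L$ by $\hat{\boldsymbol{\theta}}_n$ up to $o_P(n^{-1/2})$, consistent with taking the MLE as the likelihood-based estimator in Definition~\ref{def:dpp}.
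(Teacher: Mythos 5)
Your proof is correct and follows essentially the same route as the paper: part (a) by direct expansion of the Gaussian quadratic form (which the paper dismisses as trivial), and part (b) by invoking Le Cam's local expansion, substituting $t=\sqrt{n}(\boldsymbol{\theta}-\boldsymbol{\theta}_0)$, and rearranging (completing the square) to reach the Gaussian shape. The only cosmetic difference is that the paper states the expansion in the locally asymptotically quadratic form with a data-dependent random curvature matrix $K_n$, whereas you use the LAN specialization with the fixed Fisher information $I(\boldsymbol{\theta}_0)$; this changes nothing in the argument.
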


The proof of Lemma~\ref{lemma:asymptotic_likelihood} (a) is trivial.  Lemma~\ref{lemma:asymptotic_likelihood} (b) directly follows from the locally asymptotically quadratic property that is satisfied by a large family of probability distributions~\citep{hajek1972local}. We use the definition given in~\citet[Chapter~6]{le2012asymptotics} to give a proof of Lemma~\ref{lemma:asymptotic_likelihood} in Appendix~\ref{appendix:proof_laq}. \citet{geyer2013asymptotics} also considers quadratic log-likelihoods. Note that, in Lemma~\ref{lemma:asymptotic_likelihood} (b), ${\bf y}_i$ can be either a scalar or vector sample.

Theorem~\ref{theorem:conditionsDPP_gauss} below provides a necessary and sufficient condition for not observing the DPP in exponential-quadratic likelihoods with a multivariate Gaussian prior. 
\begin{theorem}[necessary and sufficient condition for DPP]\label{theorem:conditionsDPP_gauss}
The DPP does not occur if and only if $\Delta_1 = \boldsymbol{\lambda}^{\top}\Sigma^{p}\left(\Sigma^{L}\right)^{-1}\left(\boldsymbol{\mu}^L - \boldsymbol{\mu}^{\pi}\right)$ and $\Delta_2=\boldsymbol{\lambda}^{\top}\Sigma^{p}\left(\Sigma^{\pi}\right)^{-1}\left(\boldsymbol{\mu}^{L} - \boldsymbol{\mu}^{\pi}\right)$ are both positive ($\geq 0$) or both negative ($\leq 0$), where $\Sigma^{p}= \left[\left(\Sigma^{\pi}\right)^{-1} + \left(\Sigma^{L}\right)^{-1}\right]^{-1}$. 
\end{theorem}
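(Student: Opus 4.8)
The plan is to reduce the statement to a short linear-algebra computation with the Gaussian conjugate update formulas, and then read off the sign condition directly from Definition~\ref{def:dpp}. First I would record the standard posterior identities for a multivariate Gaussian prior combined with an exponential-quadratic (Gaussian-shaped) likelihood: the precisions add, $(\Sigma^p)^{-1} = (\Sigma^\pi)^{-1} + (\Sigma^L)^{-1}$, and the posterior mean is the precision-weighted average $\boldsymbol{\mu}^p = \Sigma^p\big[(\Sigma^\pi)^{-1}\boldsymbol{\mu}^\pi + (\Sigma^L)^{-1}\boldsymbol{\mu}^L\big]$. These are classical and are in fact already implicit in the statement through the given formula for $\Sigma^p$.

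The key step is to rewrite the two displacement vectors $\boldsymbol{\mu}^p - \boldsymbol{\mu}^\pi$ and $\boldsymbol{\mu}^p - \boldsymbol{\mu}^L$ so that the factor $\boldsymbol{\mu}^L - \boldsymbol{\mu}^\pi$ is exposed. Using $\boldsymbol{\mu}^\pi = \Sigma^p (\Sigma^p)^{-1}\boldsymbol{\mu}^\pi = \Sigma^p\big[(\Sigma^\pi)^{-1} + (\Sigma^L)^{-1}\big]\boldsymbol{\mu}^\pi$ and subtracting from the expression for $\boldsymbol{\mu}^p$, the $(\Sigma^\pi)^{-1}\boldsymbol{\mu}^\pi$ terms cancel and one obtains $\boldsymbol{\mu}^p - \boldsymbol{\mu}^\pi = \Sigma^p (\Sigma^L)^{-1}(\boldsymbol{\mu}^L - \boldsymbol{\mu}^\pi)$. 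The symmetric manipulation, writing $\boldsymbol{\mu}^L$ in terms of the posterior precision instead, gives $\boldsymbol{\mu}^p - \boldsymbol{\mu}^L = -\Sigma^p (\Sigma^\pi)^{-1}(\boldsymbol{\mu}^L - \boldsymbol{\mu}^\pi)$. Left-multiplying both identities by $\boldsymbol{\lambda}^\top$ yields $\boldsymbol{\lambda}^\top(\boldsymbol{\mu}^p - \boldsymbol{\mu}^\pi) = \Delta_1$ and $\boldsymbol{\lambda}^\top(\boldsymbol{\mu}^p - \boldsymbol{\mu}^L) = -\Delta_2$, exactly with the $\Delta_1,\Delta_2$ of the statement.

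Finally I would substitute into the DPP criterion. Since $\mathbb{E}_{\rm post}(\eta) - \eta^\pi = \Delta_1$ and $\mathbb{E}_{\rm post}(\eta) - \eta^L = -\Delta_2$, the condition $\left(\mathbb{E}_{\rm post}(\eta) - \eta^\pi\right)\left(\mathbb{E}_{\rm post}(\eta) - \eta^L\right) > 0$ for a DPP becomes $-\Delta_1\Delta_2 > 0$, i.e.\ $\Delta_1\Delta_2 < 0$. Hence no DPP occurs precisely when $\Delta_1\Delta_2 \ge 0$, which is the asserted ``both $\ge 0$ or both $\le 0$'' sign agreement, and the boundary cases $\Delta_1 = 0$ or $\Delta_2 = 0$ (posterior estimate coinciding with one of the reference values) are consistent with the non-strict inequalities in the statement.

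I do not anticipate a genuine obstacle: the argument is essentially a one-line identity plus bookkeeping. The only two points requiring care are the sign flip — the displacement toward the likelihood produces $-\Delta_2$ rather than $\Delta_2$, which is what makes ``no DPP'' equivalent to \emph{agreement} rather than disagreement of the signs — and reconciling the strict inequality in Definition~\ref{def:dpp} with the non-strict phrasing of the theorem, handled by the boundary remark above.
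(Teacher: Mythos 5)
Your proof is correct and follows essentially the same route as the paper: apply the conjugate Gaussian posterior mean formula, compute $\boldsymbol{\lambda}^\top(\boldsymbol{\mu}^p-\boldsymbol{\mu}^\pi)=\Delta_1$ and $\boldsymbol{\lambda}^\top(\boldsymbol{\mu}^p-\boldsymbol{\mu}^L)=-\Delta_2$, and read off the sign condition from Definition~\ref{def:dpp}. If anything, your explicit displacement identities $\boldsymbol{\mu}^p-\boldsymbol{\mu}^\pi=\Sigma^p(\Sigma^L)^{-1}(\boldsymbol{\mu}^L-\boldsymbol{\mu}^\pi)$ and $\boldsymbol{\mu}^p-\boldsymbol{\mu}^L=-\Sigma^p(\Sigma^\pi)^{-1}(\boldsymbol{\mu}^L-\boldsymbol{\mu}^\pi)$ spell out the algebra that the paper's appendix leaves implicit.
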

See Appendix~\ref{appendix:conditionsDPP_gauss} for a proof of the theorem. Note that $\Delta_1=0$ and $\Delta_2=0$ define two hyper-planes.  
When the samples give a $\boldsymbol{\mu}^L$ that lies on the same side of the two hyperplanes with $\Delta_1\Delta_2 \geq 0$, then DPP does not occur; otherwise, the DPP occurs. This scenario is demonstrated via repeated simulations in Section~\ref{subsection:numerical_results_gaussian}.

Theorem~\ref{theorem:gaussian_conjugate_dpp_prob} below provides the probability that the DPP occurs for the Gaussian population given in Lemma~\ref{lemma:asymptotic_likelihood} (a). This is the sampling probability respect to the true data generating model. 
\begin{theorem}[probability that DPP occurs, {multivariate} Gaussian case]\label{theorem:gaussian_conjugate_dpp_prob}
Using the same notations as in Lemma~\ref{lemma:asymptotic_likelihood} (a), 
the DPP occurs with probability $$P_{\bf{y}_n|\boldsymbol{\theta}}\left(n \boldsymbol{\lambda}^{\top}\Sigma^{p}\Lambda^{-1}\left(\overline{\bf y}_n - \boldsymbol{\mu}^{\pi}\right)\left(\overline{\bf y}_n - \boldsymbol{\mu}^{\pi}\right)^{\top}\left(\Sigma^{\pi}\right)^{-1} \Sigma^{p}\boldsymbol{\lambda}< 0 \right),$$ where the probability $P_{\bf{y}_n|\boldsymbol{\theta}}$ is taken with respect to the true data generating model.  
\end{theorem}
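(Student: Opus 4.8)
\noindent\emph{Proof strategy.} The plan is to reduce the probabilistic claim to the deterministic, data-conditional characterization of the DPP already supplied by Theorem~\ref{theorem:conditionsDPP_gauss}, and then to specialize that characterization to the Gaussian population model of Lemma~\ref{lemma:asymptotic_likelihood}(a). By Theorem~\ref{theorem:conditionsDPP_gauss}, for any fixed realization of the data (hence fixed $\boldsymbol{\mu}^L$ and $\Sigma^L$) the DPP fails exactly when $\Delta_1$ and $\Delta_2$ are both $\geq 0$ or both $\leq 0$. Since Definition~\ref{def:dpp} uses a strict inequality, the complementary event ``DPP occurs'' is precisely $\{\Delta_1>0,\ \Delta_2<0\}\cup\{\Delta_1<0,\ \Delta_2>0\}$, i.e.\ $\{\Delta_1\Delta_2<0\}$, where $\Delta_1=\boldsymbol{\lambda}^{\top}\Sigma^{p}(\Sigma^{L})^{-1}(\boldsymbol{\mu}^L-\boldsymbol{\mu}^{\pi})$ and $\Delta_2=\boldsymbol{\lambda}^{\top}\Sigma^{p}(\Sigma^{\pi})^{-1}(\boldsymbol{\mu}^L-\boldsymbol{\mu}^{\pi})$.

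Next I would collapse the product $\Delta_1\Delta_2$ into a single quadratic form. Because $\Delta_1$ and $\Delta_2$ are scalars, $\Delta_1\Delta_2=\Delta_1\,\Delta_2^{\top}$, and using the symmetry of $\Sigma^{p}$ and $\Sigma^{\pi}$ one obtains
\[
\Delta_1\Delta_2=\boldsymbol{\lambda}^{\top}\Sigma^{p}(\Sigma^{L})^{-1}(\boldsymbol{\mu}^L-\boldsymbol{\mu}^{\pi})(\boldsymbol{\mu}^L-\boldsymbol{\mu}^{\pi})^{\top}(\Sigma^{\pi})^{-1}\Sigma^{p}\boldsymbol{\lambda}.
\]
Now I substitute the quantities from Lemma~\ref{lemma:asymptotic_likelihood}(a): $\boldsymbol{\mu}^L=\overline{\bf y}_n$ and $\Sigma^L=\Lambda/n$, so that $(\Sigma^{L})^{-1}=n\Lambda^{-1}$; the displayed expression then becomes $n\,\boldsymbol{\lambda}^{\top}\Sigma^{p}\Lambda^{-1}(\overline{\bf y}_n-\boldsymbol{\mu}^{\pi})(\overline{\bf y}_n-\boldsymbol{\mu}^{\pi})^{\top}(\Sigma^{\pi})^{-1}\Sigma^{p}\boldsymbol{\lambda}$. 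Here $\Sigma^{p}=[(\Sigma^{\pi})^{-1}+n\Lambda^{-1}]^{-1}$ is nonrandom, depending only on $\Sigma^{\pi}$, $\Lambda$ and $n$, so the DPP is a measurable event in the single random vector $\overline{\bf y}_n$. Taking probabilities with respect to the true data-generating law of $\overline{\bf y}_n$ then yields exactly the expression claimed in the theorem.

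The only residual point is a boundary issue: the set where $\Delta_1\Delta_2=0$ is where $\boldsymbol{\lambda}^{\top}\Sigma^{p}\Lambda^{-1}(\overline{\bf y}_n-\boldsymbol{\mu}^{\pi})=0$ or $(\overline{\bf y}_n-\boldsymbol{\mu}^{\pi})^{\top}(\Sigma^{\pi})^{-1}\Sigma^{p}\boldsymbol{\lambda}=0$, a union of (at most) two hyperplanes in $\overline{\bf y}_n$ whenever the coefficient vectors $\Lambda^{-1}\Sigma^{p}\boldsymbol{\lambda}$ and $(\Sigma^{\pi})^{-1}\Sigma^{p}\boldsymbol{\lambda}$ are nonzero; this set has Lebesgue measure zero, hence probability zero under the continuous Gaussian sampling law, so writing $<0$ versus $\leq 0$ is immaterial. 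I do not anticipate any genuine obstacle: the whole argument rests on the short symmetric-matrix identity rewriting $\Delta_1\Delta_2$ as the stated quadratic form, once Theorem~\ref{theorem:conditionsDPP_gauss} is in hand. If a more explicit answer were desired, one could further write the quadratic form as $(\mathbf{a}^{\top}\mathbf{z})(\mathbf{b}^{\top}\mathbf{z})$ with $\mathbf{z}=\overline{\bf y}_n-\boldsymbol{\mu}^{\pi}$, $\mathbf{a}=\Lambda^{-1}\Sigma^{p}\boldsymbol{\lambda}$, $\mathbf{b}=(\Sigma^{\pi})^{-1}\Sigma^{p}\boldsymbol{\lambda}$, and compute the orthant probability that the two jointly Gaussian linear forms $\mathbf{a}^{\top}\mathbf{z}$ and $\mathbf{b}^{\top}\mathbf{z}$ have opposite signs via the bivariate normal CDF, but this refinement is not needed for the statement as given.
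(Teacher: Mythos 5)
Your proposal is correct and follows essentially the same route as the paper: Theorem~\ref{theorem:gaussian_conjugate_dpp_prob} is obtained there as an immediate consequence of Theorem~\ref{theorem:conditionsDPP_gauss}, writing the DPP event as $\Delta_1\Delta_2<0$, collapsing the product into the quadratic form, and substituting $\boldsymbol{\mu}^L=\overline{\bf y}_n$, $(\Sigma^L)^{-1}=n\Lambda^{-1}$ from Lemma~\ref{lemma:asymptotic_likelihood}(a) before taking the probability under the true data-generating law. Your extra remark on the measure-zero boundary is harmless but not needed, since the equivalence of the DPP event with $\{\Delta_1\Delta_2<0\}$ is exact.
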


This probability that DPP occurs can be computed using Monte Carlo simulations. For example, in the Gaussian model with a linear contrast of the mean parameters $\eta$, the following data generating model for Gaussian conjugate models is defined in Lemma~\ref{lemma:asymptotic_likelihood} (a): 
    \begin{equation} 
        {\bf y}_i\stackrel{\rm i.i.d.}{\sim} \mathcal{N}(\boldsymbol{\mu}^{o}, \Sigma^{o}), \quad i = 1,\ldots, n; \label{eq:data_generating_model}
    \end{equation}
we have ${\bf z} = \overline{\bf y}_n - \boldsymbol{\mu}^{\pi}\sim \mathcal{N}(\boldsymbol{\mu}^{o} - \boldsymbol{\mu}^{\pi}, \Sigma^{o}/n)$. {Under model~(\ref{eq:data_generating_model}) and for any given $\boldsymbol{\lambda}$ and $(\boldsymbol{\mu}^{\pi}, \boldsymbol{\Sigma}^{\pi})$, w}e can simulate $\bf z$ from this Gaussian repeatedly and count the frequency (probability) of the inequality {in Theorem~\ref{theorem:gaussian_conjugate_dpp_prob}} that defines the DPP holds. Note that here $\Sigma^0$ does not necessarily have to be equal to $\Lambda$, since in practice the data generating model can be different from the model that the data analyst assumes. 

The following result directly follows from Theorem~\ref{theorem:gaussian_conjugate_dpp_prob}, under the data generating model in~\eqref{eq:data_generating_model}. 
\begin{theorem}[{possibility of DPP for {any linear margin}, {multivariate} Gaussian case}] For the Gaussian model in Theorem~\ref{theorem:gaussian_conjugate_dpp_prob} and under the data generating model specified in Equation~\eqref{eq:data_generating_model}, for any $\boldsymbol{\lambda}$, i.e. taking any margin, the probability that DPP occurs is always positive except when $\boldsymbol{\lambda}^{\top}\Sigma^{p}(\Lambda^{-1} - c \left(\Sigma^{\pi}\right)^{-1})=0$ for some positive constant $c$. 
\label{cor1}
\end{theorem}
\begin{remark}
The probability that DPP occurs is equal to zero only when $(\Lambda^{-1} - c \left(\Sigma^{\pi}\right)^{-1})\Sigma^{p}\boldsymbol{\lambda}=0$ has non-zero solutions for $\boldsymbol\lambda$, in other words, the square matrix $(\Lambda^{-1} - c \left(\Sigma^{\pi}\right)^{-1})$ is not invertible for some constant $c$~\citep{schott2010reduced}. A special case of this is when the sample covariance $\Lambda$ is a multiple of the prior covariance $\Sigma^\pi$. 
\end{remark}

{We have so far considered the case where a the parameter of interest  $\eta = \boldsymbol{\lambda}^T \boldsymbol{\theta}$ is {\it any} pre-defined linear margin, with fixed  $\boldsymbol{\lambda}$. Theorem~\ref{cor1} establishes that in the multivariate Gaussian conjugate model, under very mild assumptions on the ranks of the sample and prior covariance matrices, the probability that DPP occurs is positive. 
Theorem~\ref{theorem:gaussian_existence} next establishes that under the same general setting,  with probability one the DPP would occur in {\it some} linear margin of the parameter space.}

\begin{theorem}[{certainty of DPP {in some linear margin}, {multivariate} Gaussian case}]
For the multivariate Gaussian conjugate model given in Lemma~\ref{lemma:asymptotic_likelihood} (a), unless when a non-zero constant $c$ exists such that $c \Lambda =  \Sigma^\pi$, there always exists a nontrivial space for possible $\boldsymbol{\lambda}$ such that the DPP occurs with probability one. Here, the probability is with respect to the joint distribution on $(\boldsymbol{\theta}, {\bf y})$. 
\label{theorem:gaussian_existence}
\end{theorem}
 
A proof of  Theorem~\ref{theorem:gaussian_existence} is given in Appendix~\ref{appendix:gaussian_existence_proof}. This theorem establishes that in the multivariate Gaussian conjugate model, as long as the sample covariance matrix and prior covariance matrix are not multiples of each other, the DPP occurs with probability one in at least one linear margin of the parameter space. 
 
Taken together, the trio of Theorems~\ref{theorem:gaussian_conjugate_dpp_prob},~\ref{cor1} and~\ref{theorem:gaussian_existence} establish the ubiquity of the DPP. They hint that the nature of the phenomenon is an adverse interaction between the geometry of the multi-dimensional parameter space, and probability of the statistical model defined on it. In what follows, we examine several special cases in the Gaussian family of models, and discuss when the DPP does or does not occur in those situations.

\vspace{12pt}

\subsection{Special Cases}
We consider in this subsection several examples that are special cases of Theorem~\ref{theorem:conditionsDPP_gauss}. 
We first focus on two examples in which the parameter of interest is a linear marginal, and then move onto two additional examples of a special case with linear contrast between means in a two-dimensional setup. The purpose of having these concrete, simplified examples is to a) build intuitions on when and how the DPP occurs; b) illustrate how linear contrasts results in DPP in the Gaussian case, thus offering insight on the model that first revealed DPP in \citet{xie2013incorporating} which is a linear contrast; and c) potentially provide guidelines of avoiding them in modeling problems. Furthermore, we lay out the example, Example~\ref{proposition:2dimconstrast_heter_uncor_cov}, for which we illustrate the relationship with the Simpson's paradox in Section~\ref{section:geometry}. {It turns out in the contrasts examples that DPP is not a consequence of parameter dependence among the elements in either the prior or the likelihood specifications. By assuming nonhomogeneous variances in the component dimensions of the parameter is enough to create the unsettling phenomenon.}

Example~\ref{proposotion:generaldim_Gaussian_diagonal_cov} concerns when both the prior and likelihood covariance matrices are diagonal. 


\begin{example}[{diagonal covariances}]\label{proposotion:generaldim_Gaussian_diagonal_cov}
Assume that $\Sigma^{\pi} = {\rm diag}(\sigma_{\pi 1}^2, \ldots, \sigma_{\pi d}^2)$ and $\Sigma^L =  {\rm diag}(\sigma_{L 1}^2, \ldots, \sigma_{L d}^2)$. Define $\omega_j = \frac{\sigma_{\pi j}^{-2}}{\sigma_{\pi j}^{-2} + \sigma_{L j}^{-2}}$ for $j=1,\ldots, d$. In this case, the DPP does not occur if and only if
\begin{align*}
\Delta_1 \Delta_2 = \sum_{i=1}^d \sum_{j=1}^d \lambda_i \lambda_j (1-\omega_i) \omega_j (\mu_i^L - \mu_i^{\pi}) (\mu_j^L - \mu_j^{\pi}) \geq 0,
\end{align*}
where $\Delta_1 = \sum_{i=1}^d \lambda_i (1-\omega_i) (\mu_i^L - \mu_i^{\pi})$ and $\Delta_2 = \sum_{i=1}^d \lambda_i \omega_i (\mu_i^L - \mu_i^{\pi})
$. When $\omega_j =\omega$ for all $j=1,\ldots, d$, $\omega\Delta_1=(1-\omega)\Delta_2$ thus DPP does not occur as long as $\boldsymbol{\mu}^{\pi}\neq \boldsymbol{\mu}^L$ and $\boldsymbol{\lambda}\neq \boldsymbol{0}$.
\end{example}

When $\boldsymbol{\mu}^{\pi}\neq \boldsymbol{\mu}^L$ and $\boldsymbol{\lambda}\neq \boldsymbol{0}$, special cases to the above for which DPP does not occur include {(i)}
when $\sigma_{\pi j}^2 = C \sigma_{L j}^2$ for all $j$ where $C > 0$, that is, the prior and likelihood have the same pattern of heterogeneity; or 
{(ii)}
when $\sigma_{\pi j}^2 = \sigma_{\pi}^2$ and $\sigma_{L j}^2 = \sigma_{L}^2$ for all $j$, that is, the prior and likelihood both have homogeneous, independent dimensions. In other words, when the parameters are orthogonal in both the prior and likelihood, the DPP does not occur when the prior and the likelihood contours are nicely ``aligned'' in the sense of elongated directions/dimensions. 

Example~\ref{proposotion:generaldim_Gaussian_homo_cor} next concerns the situation when both the prior and the likelihood employ homogeneous correlation (or equicorrelation) structure across all dimensions and equal marginal variances. 

\begin{example}[{equicorrelation with homogeneous variances}]\label{proposotion:generaldim_Gaussian_homo_cor}
Assume that $\Sigma^{\pi}= \sigma_{\pi}^2\big[ (1-r) I_d + r \boldsymbol{1}_d \boldsymbol{1}_d^{\top}\big]$ and $\Sigma^{L}= \sigma_{L}^2 \big[(1-\rho) I_d + \rho \boldsymbol{1}_d \boldsymbol{1}_d^{\top}\big]$, where $I_d$ is a diagonal matrix with diagonal elements equal to $1$, $\boldsymbol{1}_d$ is a $d\times 1$ column vector of $1$s, and $-1< r, \rho < 1$; i.e. we have
\begin{equation*}
\Sigma^{\pi} = \sigma_{\pi}^2 \left(\begin{array}{cccc}
1 & r & \cdots & r\\ 
r & 1 & \cdots & r\\ 
\ldots & \ldots & \ldots & \ldots\\ 
r & \cdots & r & 1
\end{array}\right), \Sigma^{L} = \sigma_{L}^2\left(\begin{array}{cccc}
1 & \rho & \cdots & \rho\\ 
\rho & 1 & \cdots & \rho\\ 
\ldots & \ldots & \ldots & \ldots\\ 
\rho & \cdots & \rho & 1
\end{array}\right).
\end{equation*}
Then, DPP does not occur if and only if $\Delta_1\Delta_2 \geq 0$, where
\begin{align*}
\Delta_1
= W_{r\rho} d_{L\pi}^{(1)} - C_{r\rho} d_{L\pi}^{(2)},\quad \Delta_2
= (1-W_{r\rho}) d_{L\pi}^{(1)} + C_{r\rho} d_{L\pi}^{(2)};
\end{align*}
and
$W_{r\rho} = \frac{\sigma_{\pi}^2(1-r)}{\sigma_{\pi}^2(1-r) + \sigma_L^2 (1-\rho)}$, $d_{L\pi}^{(1)} = \boldsymbol{\lambda}^{\top} (\boldsymbol{\mu}^L - \boldsymbol{\mu}^{\pi})$, $d_{L\pi}^{(2)} = \boldsymbol{\lambda}^{\top} \boldsymbol{1}_d \boldsymbol{1}_d^{
\top}(\boldsymbol{\mu}^L - \boldsymbol{\mu}^{\pi})$, and
\begin{align*}
C_{r\rho} &= \frac{\sigma_{\pi}^2 \sigma_L^2 (\rho - r)}{\sigma_{\pi}^2(1-r)+\sigma_L^2 (1-\rho)} \frac{1}{{\sigma_L^2 (\rho d+1-\rho)} + \sigma_{\pi}^2(rd+1-r)}.
\end{align*} 
\end{example}


When $\boldsymbol{\mu}^{\pi}\neq \boldsymbol{\mu}^L$ and $\boldsymbol{\lambda}\neq \boldsymbol{0}$, special cases to the above for which DPP \textit{never} occurs include {(i)} 
when $\rho = r$ (thus $C_{r\rho} = 0$), that is, the prior and likelihood have the same correlation pattern; or {(ii)} 
when $\boldsymbol{\lambda}^{\top} \boldsymbol{1}_d = 0$ (thus $d_{L\pi}^{(2)} = 0$), that is, the prior and likelihood have similar correlation pattern and the parameter of interest is a `contrast'. The special case for which DPP would \textit{always} occur is when $\boldsymbol{\lambda}^{\top}(\boldsymbol{\mu}^L - \boldsymbol{\mu}^{\pi}) = 0$ (thus $d_{L\pi}^{(1)} = 0$) and $\rho\neq r$, $\boldsymbol{\lambda}^{\top} \boldsymbol{1}_d\neq 0$. This corresponds to when the quantity of interest $\eta = \boldsymbol{\lambda}^{\top}\boldsymbol{\theta}$ lies on the direction ($\boldsymbol{\lambda}$) that is orthogonal to the direction of prior-likelihood mean contrast ($\boldsymbol{\mu}^L - \boldsymbol{\mu}^{\pi}$), which is the \textit{farthest away from being a weighted average of the prior mean and the mean given by the data likelihood}. 

\begin{remark}
The situation above when the DPP always occurs is not as significant of a concern as opposed to the seemingly weaker statements of DPP occuring with positive probability. This is because the linear equation that defines this situation, $\boldsymbol{\lambda}^{\top}(\boldsymbol{\mu}^L - \boldsymbol{\mu}^{\pi}) = 0$, actually happens with probability $0$ under the Gaussian conjugate model. Therefore, the more interesting discussions in the paper are related to the cases when DPP occurs with positive probability where the geometry of the prior and likelihood contours ($\Sigma^\pi, \Sigma^L$ in the Gaussian model) plays an important role.
\end{remark}

We now examine linear contrasts of the two-dimensional posterior mean, {that is, $\boldsymbol{\lambda} = (1, -1)^\top$}, special cases of the previous examples to gain more intuition. Example~\ref{proposition:2dimconstrast_hom_cov} shows that the DPP does not occur as long as the two component dimensions of the parameter {have the same variance within the prior and the likelihood specifications}, regardless of correlation structure. On the contrary, Example~\ref{proposition:2dimconstrast_heter_uncor_cov} shows that when the variances of the two dimensions differ, it creates the possibility for DPP even if the two dimensions are independent within both the prior and likelihood. In what follows, $\Delta_\pi = (\mu_1^{\pi} - \mu_2^{\pi})$, and $\Delta_L = (\mu_1^{L} - \mu_2^{L})$.

\begin{example}[two-dimensional contrast, homogeneous variance] \label{proposition:2dimconstrast_hom_cov}
If $\Sigma^{\pi} = \sigma_{\pi}^2 \left(\begin{array}{cc}
1 & r\\ r & 1
\end{array}\right)$, $\Sigma^{L} = \sigma_{L}^2\left(\begin{array}{cc}
1 & \rho\\ \rho & 1
\end{array}\right)$, where $-1< r, \rho< 1$, then for $\boldsymbol{\lambda} = (1, -1)^\top$, the DPP does not occur. 
\end{example}

A special case of Example~\ref{proposition:2dimconstrast_hom_cov} is when $r=\rho = 0$. The posterior distribution for $\theta_1 - \theta_2$ is $\mathcal{N}\left( w\Delta_\pi + (1-w) \Delta_L, 2\left[\sigma_{\pi}^{-2} + \sigma_{L}^{-2}\right]^{-1} \right)$, where $w = \frac{\sigma_{\pi}^{-2}}{\sigma_{\pi}^{-2} + \sigma_{L}^{-2}}\in (0, 1)$. Note that the posterior mean $w\Delta_\pi + (1-w) \Delta_L$ must lie between $\Delta_\pi$ and $\Delta_L$, regardless which one is larger. Another special case is when only $r=0$, i.e. for correlated parameters' likelihood, we set an independent prior; or similarly when only $\rho = 0$, i.e. for uncorrelated parameters' likelihood, we set a correlated prior. Again, we can write the posterior mean for the contrast as a convex combination of the prior contrast $\Delta_{\pi}$ and the MLE $\Delta_L$. Thus the DPP does not occur. The detailed result and proof are given in Appendix~\ref{appendix:statement_proof_gauss_homo_corollary}. Example~\ref{proposition:2dimconstrast_hom_cov} shows that in practice, if we can make the the marginal variances of the parameters in both the prior and likelihood close to being homogeneous, DPP could be mitigated or even avoided. In fact, the homogeneity of marginal variances is a nice property to have not only for avoiding the DPP, but also for the efficiency of computational algorithms in Bayesian inference, which we discuss in more details in Section~\ref{section:summary}.

In contrast to Example~\ref{proposition:2dimconstrast_hom_cov}, Example~\ref{proposition:2dimconstrast_heter_uncor_cov} gives the condition under which DPP occurs when the two component dimensions of the parameter are uncorrelated, but the marginal variances are not the same. 

\begin{example}[two-dimensional contrast, {heterogeneous variance}]\label{proposition:2dimconstrast_heter_uncor_cov}
Let $\Sigma^{\pi} =  \left(\begin{array}{cc}
s_{\pi}^2 & 0\\ 0 & \sigma_{\pi}^2
\end{array}\right)$, $\Sigma^{L} = \left(\begin{array}{cc}
s_L^2 &  0\\ 0 & \sigma_{L}^2
\end{array}\right)$, $\boldsymbol{\lambda}= (1, -1)^\top$, and denote
\begin{equation} \label{eq:2contrast-weights}
	w^{s} = \frac{s_{\pi}^{-2}}{s_{\pi}^{-2} + s_{L}^{-2}}, \quad w^{\sigma} = \frac{\sigma_{\pi}^{-2}}{\sigma_{\pi}^{-2} + \sigma_{L}^{-2}}.
\end{equation}
Then the posterior mean for $\theta_1-\theta_2$ is $\Delta^* = w^s \mu_1^{\pi} - w^{\sigma}\mu_2^{\pi} + (1-w^s) \mu_1^{L} - (1-w^{\sigma})\mu_2^{L}$. 
\begin{enumerate}
    \item[(i)] When $w^{\sigma} = w^s$, i.e. the relative curvature between the prior and likelihood is the same for the two dimensions, $\min\{\Delta_{\pi}, \Delta_{L}\}\leq\Delta^*\leq \max\{\Delta_{\pi}, \Delta_{L}\}$ always holds and the equality holds if and only if $\Delta_L=\Delta_{\pi}$. In the special case when $\Delta_L=\Delta_{\pi}$, we have $\Delta^*=\Delta_L=\Delta_{\pi}$, which is perfect alignment of prior/MLE/posterior. Thus the DPP does not occur in this case. 
    \item[(ii)] When $w^{\sigma} \neq w^s$, without loss of generality, we assume that $w^{\sigma} > w^s$, then DPP occurs if and only if $\mu_2^L\neq \mu_2^{\pi}$ and
\begin{equation}\label{eq:2contrast}
\frac{1-w^{\sigma}}{1-w^s} <\frac{\mu_1^L - \mu_1^{\pi}}{\mu_2^L - \mu_2^{\pi}} < \frac{w^{\sigma}}{w^s}.
\end{equation}
\end{enumerate}
\end{example}

Example~\ref{proposition:2dimconstrast_heter_uncor_cov} sends a somewhat surprising message, as compared to the commonly perceived understandings of ``difficult geometry'' of the likelihood and prior misalignment. As it turns out, DPP is not a consequence of parameter dependence in either the prior or the likelihood specifications. Just by assuming nonhomogeneous variances in the component dimensions of the parameter is enough to create the unsettling phenomenon. The geometry behind Proposition 4 is the subject of detailed analysis in Section~\ref{section:geometry}.

\subsection{Numerical Results}
\label{subsection:numerical_results_gaussian}
Numerical results based on repeated simulations under various multivariate Gaussian models corresponding to {the heterogeneous variance case with and without correlation structures (the latter corresponds to Example~\ref{proposition:2dimconstrast_heter_uncor_cov})} 
are collected in Figure~\ref{fig:gauss_dpp}. 
The parameter of interest is $\eta = \theta_1 - \theta_2$, the difference of the two Gaussian marginal means. In columns 1 and 3, the prior mean for $\boldsymbol{\theta}$ is $(-1, 1)^{\top}$ and in columns 2 and 4, the prior mean for $\boldsymbol{\theta}$ is $(0, 0)^{\top}$. In columns 1 and 2, $\Lambda = \left(\begin{array}{cc}
        7 & 2 \\
        2 & 1
    \end{array}\right), \Sigma_{\pi} = \left(\begin{array}{cc}
        5 & 4 \\
        4 & 4
    \end{array}\right)$; and in columns 3 and 4, $\Lambda = \left(\begin{array}{cc}
        7 & 0 \\
        0 & 1
    \end{array}\right), \Sigma_{\pi} = \left(\begin{array}{cc}
        5 & 0 \\
        0 & 4
    \end{array}\right)$. The data generating models are given by Gaussian with mean $(0,0)^\top$ and variance-covariance $\Lambda$. Monte Carlo estimates of the probabilities of DPP under each model are given. We can see that the DPP 
    {is gradually mitigated} as we increase the sample size, although at a slower rate for some models than others. For the examples shown here, models with uncorrelated parameter components (in both the likelihood and the prior) seem less prone to DPP than models with highly correlated dimensions. However, DPP is not eliminated in these cases, and the extent of reduction is a function of the parameter values used for the simulations shown here. In Example~\ref{proposition:2dimconstrast_heter_uncor_cov}, heterogeneous variances with independent dimensions is shown to be related to the DPP. This example shows that heterogeneous variances plus correlation among parameters make the situation even worse. Models with priors not geometrically aligned with the likelihood, e.g. with misaligned prior mean values and MLEs, heterogeneous marginal variances and/or correlation structure, are more likely to suffer from DPP than otherwise.
     
\begin{figure}[tbph]
    \centering
    \includegraphics[width=\textwidth]{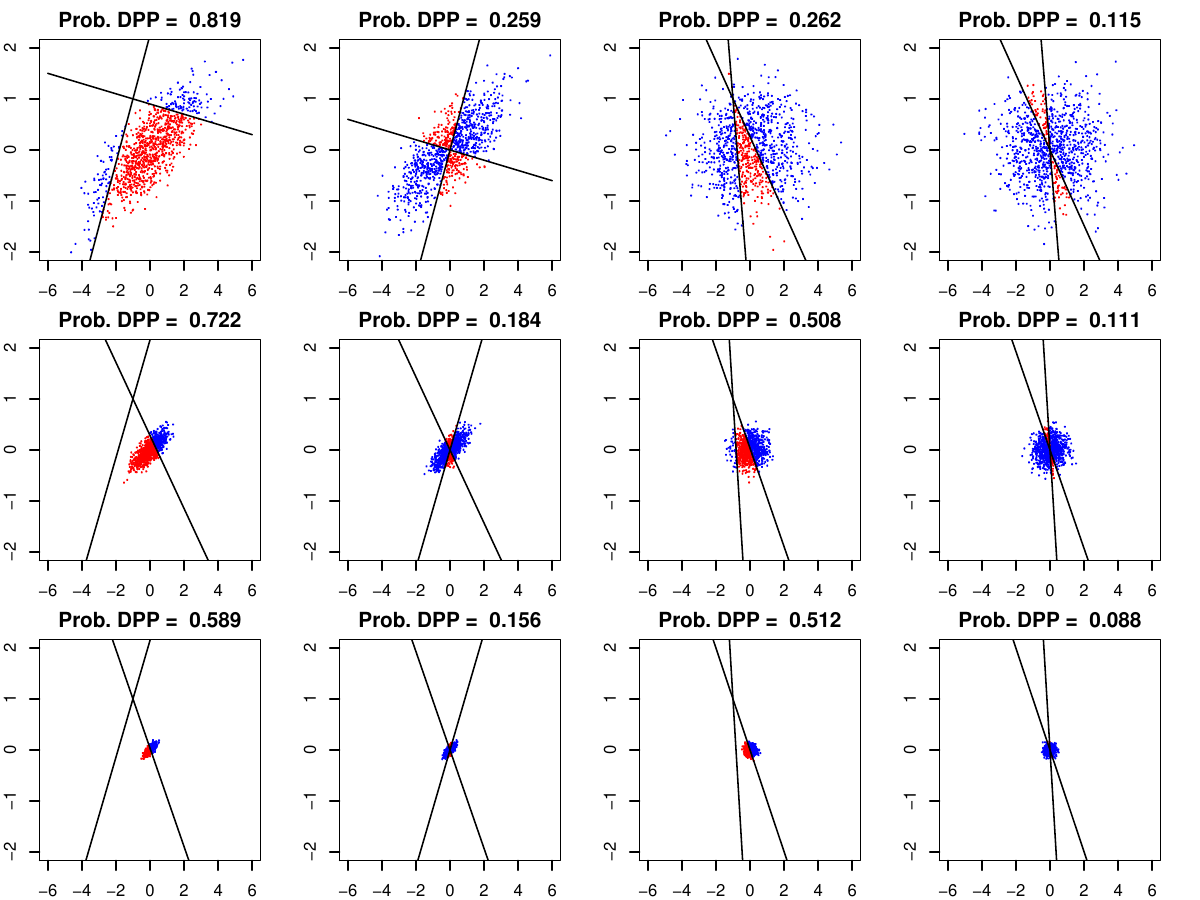}
    \caption{DPP in bivariate Gaussian models for linear contrast. Each subplot contains $1,000$ independently simulated datasets from Lemma~\ref{lemma:asymptotic_likelihood}(a), with $n=3$, $30$, and $300$ samples per dataset for each of the three rows. Each dataset is represented by a point whose x and y coordinates are the sample averages of the first and second dimensions respectively. The red dots are data occurrences for which DPP occurs, and the blue dots are those for which DPP does not occur. Separating the two are hyperplanes $\Delta_1=0$ and $\Delta_2=0$ defined in Theorem~\ref{theorem:conditionsDPP_gauss}.} 
    \label{fig:gauss_dpp}
\end{figure}


\section{The geometry of DPP and relation to Simpson's paradox}\label{section:geometry}

\subsection{An Illustration of DPP Geometry in 2-Dim Case}\label{subsec:geometry}

In this section, we take a closer look at the geometry behind DPP, and illustrate its connection with Simpson's paradox, one that occurs due to  
inconsistently aggregating sources of conditional information. For simplicity, the analysis below focuses on the scenario described in Example~\ref{proposition:2dimconstrast_heter_uncor_cov} with $d = 2$, where we assume a bivariate Gaussian conjugate model for the Bayesian assessment of the treatment efficacy of a drug in relation to a placebo. Here, the efficacy of either treatments is measured by a real number. Both the prior and the sampling distribution have independent and heterogeneous covariance structures. The inferential target is again the posterior linear contrast between the efficacy of the drug and placebo treatments. Thus, $\eta = 
\boldsymbol{\lambda} \boldsymbol{\theta} = (1, -1) \boldsymbol{\theta} = \theta_1 - \theta_2.$  

For the purpose of illustration, assume the prior mean $\boldsymbol{\mu}^{\pi} = (0.25, 0.45)$ and the MLE $\boldsymbol{\mu}^L = (1.10, 1.15)$, with respective diagonal covariance matrices  ${\Sigma^{\pi}}$ and ${\Sigma^{L}}$. The model is depicted in Figure~\ref{fig:simpson_dpp}. Note that the MLE is greater than the prior mean element-wise, that is, $\boldsymbol{\mu}^L$ is to the northeast of $\boldsymbol{\mu}^{\pi}$ in the plot. Denote $\boldsymbol{\mu}^{p}$ the posterior mean. 
Since both the prior and likelihood covariances are diagonal, the light blue rectangle with $\boldsymbol{\mu}^{\pi}$ and $\boldsymbol{\mu}^{L}$ as vertices is the region in which $\boldsymbol{\mu}^{p}$ could take value. Three lines of slope 1 pass through $\boldsymbol{\mu}^{\pi}$, $\boldsymbol{\mu}^{L}$ and $\boldsymbol{\mu}^{p}$, and intersect the $y$-axis at $A$, $B$ and $C$ respectively. The $y$-coordinates of the three intersections are respectively the prior, likelihood, and posterior linear contrasts, that is, $A = (0, \boldsymbol{\lambda}^{\top}\boldsymbol{\mu}^{\pi})$, $B= (0, \boldsymbol{\lambda}^{\top}\boldsymbol{\mu}^{L})$, and $C = (0, \boldsymbol{\lambda}^{\top}\boldsymbol{\mu}^{p})$, where $\boldsymbol{\lambda} = (1, -1)^\top$. 

By Definition~\ref{def:dpp}, DPP occurs if $C$ falls outside the closed interval between $A$ and $B$. Equivalently stated, the occurrence of DPP can be determined by examining the location of $\boldsymbol{\mu}^{p}$ relative to the dark blue parallelogram sandwiched between the two lines that pass through $\boldsymbol{\mu}^{\pi}$ and $A
 $, as well as $\boldsymbol{\mu}^{L}$ and $B$. DPP occurs if $\boldsymbol{\mu}^{p}$ falls within the light blue rectangle but outside the parallelogram, and it does not occur if $\boldsymbol{\mu}^{p}$ falls within parallelogram.

Having fixed $\boldsymbol{\mu}^{\pi}$ and $\boldsymbol{\mu}^{L}$, the location of $\boldsymbol{\mu}^{p}$ is a function of the prior and likelihood covariances $\Sigma^{\pi}$ and $\Sigma^{L}$. The specific values depicted in Figure~\ref{fig:simpson_dpp} are 
${\Sigma^{\pi}}= \text{diag}(3, 9)$
and 
${\Sigma^{L}}=\text{diag}(7,3)$. The posterior mean is then $\boldsymbol{\mu}^{p} = (0.505, 0.975)$, and posterior covariance 
$\Sigma^{p} = \text{diag}(2.1, 2.25)$. The three covariances are illustrated by their respective concentration ellipses around $\boldsymbol{\mu}^{\pi}$, $\boldsymbol{\mu}^{L}$ and $\boldsymbol{\mu}^{p}$. Notice that $\boldsymbol{\mu}^{p}$ falls within the light blue region outside the dark blue band. As a consequence, the posterior contrast ($+0.47$) is larger than both the prior ($+0.2$) and likelihood contrasts ($+0.05$), suggesting that the efficacy of the drug is assessed to be more than the placebo {\it a posteriori}, at a scale larger than that of either the prior or the data alone. 


To understand the geometry of Example~\ref{proposition:2dimconstrast_heter_uncor_cov}, define $\alpha, \beta \in (-\pi/2, \pi/2)$ such that 
\begin{equation*}
\tan\alpha=\frac{w^{\sigma}}{w^{s}}\cdot\frac{\mu_{2}^{L}-\mu_{2}^{\pi}}{\mu_{1}^{L}-\mu_{1}^{\pi}},\quad\tan\beta=\frac{1-w^{\sigma}}{1-w^{s}}\cdot\frac{\mu_{2}^{L}-\mu_{2}^{\pi}}{\mu_{1}^{L}-\mu_{1}^{\pi}}.
\end{equation*}
The two angles $\alpha$ and $\beta$ are annotated in Figure~\ref{fig:simpson_dpp}. Equation (\ref{eq:2contrast}) can be re-expressed as
\begin{equation}\label{eq:angle-alpha-beta}
\tan\beta<1<\tan\alpha.
\end{equation}
That is, given that $\mu_{2}^{L}\neq\mu_{2}^{\pi}$, DPP occurs if and only if $\beta<\pi/4<\alpha$. This happens precisely when $\boldsymbol{\mu}^{p}$ sits to the left of the line that passes through $\boldsymbol{\mu}^{\pi}$ and $A$. For the specific values of $\Sigma^{\pi}$ and $\Sigma^{L}$ in this example, the weights satisfy $w^{\sigma} > w^{s}$ with values equal to $0.7$ and $0.25$ respectively. Should it be the case that $w^{\sigma}<w^{s}$, the same argument applies once the roles of $\boldsymbol{\mu}^{\pi}$ and $\boldsymbol{\mu}^{L}$ are flipped. The necessary and sufficient condition for DPP to occur is then $\alpha<\pi/4<\beta$, or equivalently, for $\boldsymbol{\mu}^{p}$ to sit to the right of the line that passes through $\boldsymbol{\mu}^{L}$ and $B$.




\begin{figure}[tbph]
    \centering
    \includegraphics[width=0.7\textwidth]{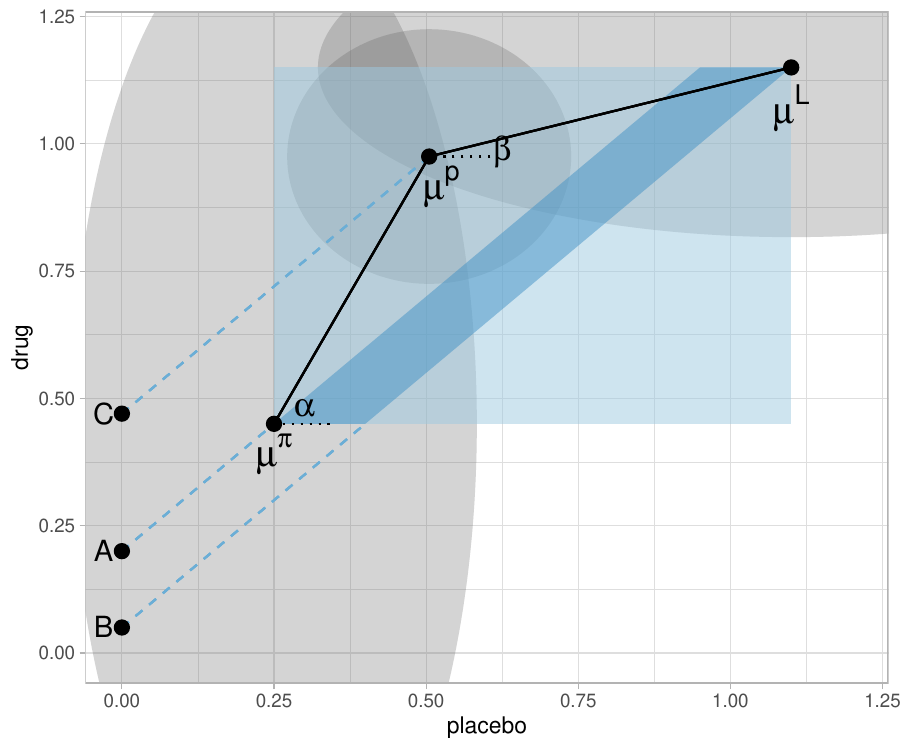}
    \caption{The geometry of DPP: posterior drug efficacy (C) exceeds the range spanned by those assessed from the prior (A) and the data (B). That is because the posterior mean ($\mu^{p}$) is only an {\it element-wise} convex combination of the prior mean ($\mu^{\pi}$) and the MLE ($\mu^{L}$),  but itself is not collinear with them. In gray are concentration ellipses of the covariance matrices ${\Sigma^{\pi}}$,  ${\Sigma^{L}}$ and $\Sigma^{p}$.}
    \label{fig:simpson_dpp}
\end{figure}

For conjugate normal models, the posterior mean $\boldsymbol{\mu}^{p}$ is an {\it element-wise} convex combination of the prior mean $\boldsymbol{\mu}^{\pi}$ and the MLE $\boldsymbol{\mu}^{L}$. That is, each dimension of $\boldsymbol{\mu}^{p}$ is a convex combination of the corresponding dimensions of $\boldsymbol{\mu}^{\pi}$ and $\boldsymbol{\mu}^{L}$, with weights determined by the prior and the sampling distribution covariances. If the weights applied to $\boldsymbol{\mu}^{\pi}$ and $\boldsymbol{\mu}^{L}$ are not balanced across dimensions, the resulting posterior mean $\boldsymbol{\mu}^{p}$ may not be an {\it overall} convex combination of $\boldsymbol{\mu}^{\pi}$ and $\boldsymbol{\mu}^{L}$, which is to say that it may not be collinear with $\boldsymbol{\mu}^{\pi}$ and $\boldsymbol{\mu}^{L}$. Indeed, when the weights are heavily imbalanced, $\boldsymbol{\mu}^{p}$ can be far from collinear with $\boldsymbol{\mu}^{\pi}$ and $\boldsymbol{\mu}^{L}$, much so that it creates ample triangularization among the three quantities for a collection of marginal directions to render the projection of  $\boldsymbol{\mu}^{p}$ outside the range of those of $\boldsymbol{\mu}^{\pi}$ and $\boldsymbol{\mu}^{L}$. Referring again to Figure~\ref{fig:simpson_dpp}, any value of $\boldsymbol{\mu}^{p}$ outside the dark blue parallelogram is considered far from collinear with $\boldsymbol{\mu}^{\pi}$ and $\boldsymbol{\mu}^{L}$, giving rise to the DPP.

To put this formally, let $\boldsymbol{\lambda}$ denote the linear margin of interest, and consider the two angles it forms with $(\boldsymbol{\mu}^p-\boldsymbol{\mu}^{L})$  and $(\boldsymbol{\mu}^p-\boldsymbol{\mu}^{\pi})$ respectively, namely $\phi^{L}$ and $\phi^{\pi}$ such that
\begin{equation}\label{eq:angles}
\cos(\phi^{L}) = \frac{\boldsymbol{\lambda}^{\top}(\boldsymbol{\mu}^p-\boldsymbol{\mu}^{L})}{|\boldsymbol{\lambda}|\cdot|\boldsymbol{\mu}^p-\boldsymbol{\mu}^{L}|}, \qquad     \cos(\phi^{\pi}) = \frac{\boldsymbol{\lambda}^{\top}(\boldsymbol{\mu}^p-\boldsymbol{\mu}^{\pi})}{|\boldsymbol{\lambda}|\cdot|\boldsymbol{\mu}^p-\boldsymbol{\mu}^{\pi}|}.
\end{equation}

By Definition~\ref{def:dpp}, the DPP occurs if and only if both cosine quantities in \eqref{eq:angles} are positive or negative. If $\boldsymbol{\mu}^{p}$ is not exactly collinear with $\boldsymbol{\mu}^{\pi}$ and $\boldsymbol{\mu}^{L}$, the marginal direction orthogonal to $(\boldsymbol{\mu}^{L} - \boldsymbol{\mu}^{\pi})$ is always vulnerable to the DPP. Indeed, any departure in $\boldsymbol{\mu}^{p}$ from the convex combination of $\boldsymbol{\mu}^{\pi}$ and $\boldsymbol{\mu}^{L}$ can be picked up by the marginal direction orthogonal to the difference of the latter two, however slight the departure may be. In addition, the neighborhood of marginal directions whose polar angles are between $\beta + \pi/4$ and $\alpha + \pi/4$ are also vulnerable to the DPP. As long as \eqref{eq:angle-alpha-beta} holds, this neighborhood is nonempty.

The geometry described here is not limited to two-dimensional situations. The same intuition applies when the Bayesian model of concern invokes a parameter space of higher dimensions. In fact, the higher the dimension of the parameter space, the more ``prevalent'' the DPP in the sense that the nonempty neighborhood of marginal directions $\boldsymbol{\lambda}$ that can result in a DPP is also of higher dimension, and can be harder to avoid, see Section~\ref{appendix:geometry_dpp_general}. 

\subsection{Connections with Simpson's Paradox}\label{subsec:simpson}

The DPP is keenly related to the Simpson's paradox~\citep{blyth1972simpson}, another puzzling phenomenon that occurs when the marginal expectation of a random variable apparently takes value outside the range of the conditional expectations of the same variable from which it is aggregated. Simpson's paradox is a consequence of incoherent marginalization: sources of conditional information were aggregated against different, as opposed to the same, marginal distributions of the conditioning variable. When the difference is substantial, the marginal expectation may appear out of range, which is otherwise mathematically impossible had the marginalization been done coherently.


The Simpson's paradox has been studied extensively in the statistics literature, in particular in the context of causal inference from observational studies. The paradox is suspect when there may exist unmeasured confounding variables that introduce systematic discrepancies in the marginalization schemes of a quantity of interest. While not a mathematical anomaly, the presence of the Simpson's paradox undermines the trustworthiness of the inference and the causal implication it may have on external generalizations \citep[][chapter 6]{pearl2009causality}; see also \cite{armistead2014resurrecting,christensen2014comment,liu2014comment,pearl2014understanding} for a recent discussion. For this reason, sensitivity
analysis in causal inference seeks to establish deterministic bounds to exclude scenarios that are essentially equivalent to the Simpson's
paradox \citep{ding2016sensitivity}.

In what follows, we make precise the analogy between Simpson's paradox and the DPP, using the same hypothetical example of a drug efficacy study as set up in Section~\ref{subsec:geometry}. We will see that when the prior and posterior means in the conjugate Gaussian model are regarded as two point estimators of the quantity of interest to be aggregated, as it has been the case with our investigation, DPP is precisely a manifestation of the Simpson's paradox. 

Suppose the clinical study of the efficacy of a drug against a placebo consists of two phases. A pilot study is conducted first, followed by a full clinical trial. In the practical design of large-scale experiments, pilot studies are often performed to provide preliminary guidance for the subsequent full experiment, and are usually smaller and more cost-effective. If the Bayesian approach is employed to analyze all available observations, information supplied by the pilot study is naturally understood as the source of prior information, relative to the full experiment that supplies the data likelihood.

Let $T_1$ and $T_2$ be the estimators of the drug and placebo efficacy respectively. Let $Z$ be the indicator variable of whether the observation is made through the pilot study ($Z=0$), which corresponds to the prior, or the full clinical trial ($Z=1$) which corresponds to the likelihood. Write
\begin{align*}
\mathbb{E}\left(T_{1}\mid Z=0\right)=\mu_{1}^{\pi}, & \quad  E\left(T_{2}\mid Z=0\right)=\mu_{2}^{\pi},\\
\mathbb{E}\left(T_{1}\mid Z=1\right)=\mu_{1}^{L}, & \quad  E\left(T_{2}\mid Z=1\right)=\mu_{2}^{L},
\end{align*}
with the understanding that the expectations are each taken with respect to a distinct and independent sample drawn from a (possibly finite) population.

Let $\boldsymbol{w}=\left(w,1-w\right)^{\top}$ be the sample marginal distribution of $Z$, that is the fraction of subjects assigned to the pilot study versus the clinical trial. Write $\boldsymbol{\mu}_{1}=\left(\mu_{1}^{\pi},\mu_{1}^{L}\right)$ and $\boldsymbol{\mu}_{2}=\left(\mu_{2}^{\pi},\mu_{2}^{L}\right)$. If $Z$ is independent of $T_{1}$ and $T_{2}$, the marginal expected linear contrast can be written as
\begin{equation*}
\boldsymbol{w}^{\top}\boldsymbol{\mu}_{1}-\boldsymbol{w}^{\top}\boldsymbol{\mu}_{2} =  \sum_{z}E\left(T_{1}-T_{2}\mid Z=z\right)P\left(Z=z\right)	= E\left(T_{1}-T_{2}\right).
\end{equation*}
As $w$ varies in $[0,1]$, it is guaranteed that
\begin{equation}\label{eq:marginal-bound}
\min\left(\mu_{1}^{\pi}-\mu_{2}^{\pi},\mu_{1}^{L}-\mu_{2}^{L}\right)\le\boldsymbol{w}^{\top}\boldsymbol{\mu}_{1}-\boldsymbol{w}^{\top}\boldsymbol{\mu}_{2}\le\max\left(\mu_{1}^{\pi}-\mu_{2}^{\pi},\mu_{1}^{L}-\mu_{2}^{L}\right).	
\end{equation}
That is, the marginal expected linear contrast is bounded within range of the conditional expected linear contrasts from the pilot study and the clinical trial. In other words, Simpson's paradox does not occur regardless of $\boldsymbol{w}$. However, if $Z$ is not independent of $T_{1}$ and $T_{2}$, that is if the assignment probabilities to the pilot study versus the clinical trial depend on the outcomes, the guarantee in (\ref{eq:marginal-bound}) does not hold. In particular, if $Z$ possesses two distinct marginal distributions, one pertinent to either the drug ($T_{1}$) or the placebo ($T_{2}$), and are respectively  
\begin{equation}\label{eq:w1w2}
\boldsymbol{w}_{1}=\left(w^{s},1-w^{s}\right)^{\top}\quad\text{and}	\quad \boldsymbol{w}_{2}=\left(w^{\sigma},1-w^{\sigma}\right)^{\top},
\end{equation}
then the ``marginal expected linear contrast'' of the drug's efficacy is written as 
\begin{equation}\label{eq:marginal-bound2}
\boldsymbol{w}_{1}^{\top}\boldsymbol{\mu}_{1}-\boldsymbol{w}_{2}^{\top}\boldsymbol{\mu}_{2} =  \lambda^{\top}\boldsymbol{\mu}^{p}.
\end{equation}
The phrase ``marginal expected linear contrast'' here is in quotes, as the marginalization of $T_1$ and $T_2$ endorsed different marginal distributions of $Z$, hence the result is incoherent for comparison purposes. In this case, Simpson's paradox is said to occur whenever
\begin{equation*}
\boldsymbol{w}_{1}^{\top}\boldsymbol{\mu}_{1}-\boldsymbol{w}_{2}^{\top}\boldsymbol{\mu}_{2} <  \min\left(\mu_{1}^{\pi}-\mu_{2}^{\pi},\mu_{1}^{L}-\mu_{2}^{L}\right), \; \text{or}\;\;  \boldsymbol{w}_{1}^{\top}\boldsymbol{\mu}_{1}-\boldsymbol{w}_{2}^{\top}\boldsymbol{\mu}_{2} > \max\left(\mu_{1}^{\pi}-\mu_{2}^{\pi},\mu_{1}^{L}-\mu_{2}^{L}\right),	
\end{equation*}
which coincides with the definition of DPP in \eqref{eqn:DPP_eqn}. By the geometric analysis in Section~\ref{subsec:geometry}, we know that the case illustrated here is another instance of Example~\ref{proposition:2dimconstrast_heter_uncor_cov}, where $\boldsymbol{w}_{1}$ and $\boldsymbol{w}_{2}$ as defined in \eqref{eq:w1w2} take values according to the respective posterior variance component coefficients of \eqref{eq:2contrast-weights}, and \eqref{eq:marginal-bound2} is precisely the posterior expected linear contrast with respect to the independent heterogeneous variance model. The Simpson's paradox occurs here, precisely when the DDP occurs there.

\subsection{Geometry of DPP for General Parameter Dimensions}
\label{appendix:geometry_dpp_general}
We illustrate the geometry of DPP for a general parameter dimension $d$ with Figure~\ref{fig:DPP_general}. We visualize the vectors ($\boldsymbol{\lambda}$, $\boldsymbol{\mu}^p -\boldsymbol{\mu}^\pi$, $\boldsymbol{\mu}^p-\boldsymbol{\mu}^L$) and hyper-planes in a 3-d plot to explicitly demonstrate the DPP in higher dimensions. The coordinate orientation and vectors are chosen such that our figure reflects general situations instead of special cases. Denote the linear space spanned by $(\boldsymbol{\mu}^p-\boldsymbol{\mu}^{\pi},\boldsymbol{\mu}^p-\boldsymbol{\mu}^{L})$  as $\mathcal{S}^{\pi L}$, and the projection of $\boldsymbol{\lambda}$ to $\mathcal{S}^{\pi L}$ as $\boldsymbol{\lambda}^{\rm proj}$. We only need to consider $\boldsymbol{\lambda}^{\rm proj}$, since the remainder $\boldsymbol{\lambda}-\boldsymbol{\lambda}^{\rm proj}$ is orthogonal to any vectors in $\mathcal{S}^{\pi L}$ thus does not contribute to either $\cos(\phi^\pi)$ or $\cos(\phi^L)$, where $\phi^\pi$ and $\phi^L$ are defined in~\eqref{eq:angles}, i.e. the angles between $\boldsymbol{\lambda}^{\rm proj}$ (or $\boldsymbol{\lambda}$ equivalently) and $\boldsymbol{\mu}^p -\boldsymbol{\mu}^\pi$,{ $\boldsymbol{\mu}^p - \boldsymbol{\mu}^L$} respectively . Without loss of generality, assume that the angle between the two vectors $\boldsymbol{\mu}^p-\boldsymbol{\mu}^{\pi}$ and $\boldsymbol{\mu}^p-\boldsymbol{\mu}^{L}$ is between $0$ and $\pi$. 
Let $\mathcal{P}^{\pi}$ and $\mathcal{P}^L$ denote subspaces (lines in the 3-d plot) within $\mathcal{S}^{\pi L}$ that are orthogonal to $\boldsymbol{\mu}^p-\boldsymbol{\mu}^{\pi}$ and $\boldsymbol{\mu}^p-\boldsymbol{\mu}^{L}$, respectively.  The half-space of $\mathcal{S}^{\pi L}$, separated by $\mathcal{P}^L$ and containing $\boldsymbol{\mu}^p-\boldsymbol{\mu}^{L}$, consists of vectors that intersect with  $\boldsymbol{\mu}^p-\boldsymbol{\mu}^{L}$ at an acute angle between $[0, \pi/2)$. Similarly, the half-space of $\mathcal{S}^{\pi L}$, separated by $\mathcal{P}^\pi$ and containing $\boldsymbol{\mu}^p-\boldsymbol{\mu}^{\pi}$, consists of vectors that intersect with $\boldsymbol{\mu}^p-\boldsymbol{\mu}^{\pi}$ at an acute angle as well. Note that $\pi/2$ is a critical angle where the cosine function changes from a positive to a negative sign. Thus, if $\boldsymbol{\lambda}^{\rm proj}$ either lies within the intersection of the two  half-spaces respectively separated by $\mathcal{P}^L$ and $\mathcal{P}^{\pi}$, or that it lies within the intersection of their complement half-spaces, then $\phi^{\pi},\phi^L$ are either both acute or both obtuse, with $\cos(\phi^{\pi})\cos(\phi^L)>0$, and the DPP would occur. And this region where $\boldsymbol{\lambda}^{\rm proj}$ can take values to result in DPP is given by the blue shaded region ($\mathcal{S}^{\pi L}$) except the grey shaded region in Figure~\ref{fig:DPP_general}. On the other hand, if  $\boldsymbol{\lambda}^{\rm proj}$ lies within the relative complement of the two half-spaces, the DPP would not occur; and this region where $\boldsymbol{\lambda}^{\rm proj}$ can take values are given as the grey shaded region in Figure~\ref{fig:DPP_general}. From here, we see that the choice of $\boldsymbol{\lambda}$ that results in DPP is a nontrivial subspace in $\mathbb{R}^{d}$, affirming the geometric prevalence of the DPP as previously established.

Furthermore, we note that from the reasoning above, especially Figure~\ref{fig:DPP_general}, if the angle between $\boldsymbol{\mu}^p -\boldsymbol{\mu}^\pi$ and $\boldsymbol{\mu}^p -\boldsymbol{\mu}^L$ is between $(0, \pi/2)$, then the region where $\boldsymbol{\lambda}$ can take values to result in DPP is larger than its complement in $\mathbb{R}^d$. This is evident from the fact that the grey shaded region is smaller than its complement in $\mathcal{S}^{\pi L}$ (the blue shaded region) in Figure~\ref{fig:DPP_general}; when the angle between $\boldsymbol{\mu}^p -\boldsymbol{\mu}^\pi$ and $\boldsymbol{\mu}^p -\boldsymbol{\mu}^L$ is less than $\pi/2$. This situation corresponds to when the joint posterior is demonstrating some ``deviation'' from both the prior and likelihood towards certain direction, thus the DPP on a marginal posterior is more prevalent. However, on the contrary, if the angle between $\boldsymbol{\mu}^p -\boldsymbol{\mu}^\pi$ and $\boldsymbol{\mu}^p -\boldsymbol{\mu}^L$ is between $\pi/2$ and $\pi$, we know that the joint posterior is closer to the case when the posterior is ``in the middle'' of the prior and the likelihood; thus in this case, the DPP on the marginal posterior is less prevalent. But in either case, the $\boldsymbol{\lambda}$ values that could result in DPP occupy a non-trivial space of $\mathbb{R}^d$. 

The reasoning of the prevalence of DPP is based on two assumptions: (1) $\boldsymbol{\mu}^p \neq \boldsymbol{\mu}^\pi$ and $\boldsymbol{\mu}^p \neq \boldsymbol{\mu}^L$; and (2) $\boldsymbol{\mu}^p-\boldsymbol{\mu}^\pi$ does not lie on the same line as $\boldsymbol{\mu}^p- \boldsymbol{\mu}^L$, thus $\mathcal{P}^\pi$ intersects $\mathcal{P}^L$ at an angle. The second assumption is equivalent to assuming that there does not exist non-zero constant $c$ such that $\boldsymbol{\mu}^p-\boldsymbol{\mu}^\pi = c(\boldsymbol{\mu}^p-\boldsymbol{\mu}^L)$. In other words, $\boldsymbol{\mu}^\pi\neq \boldsymbol{\mu}^L$ and $\boldsymbol{\mu}^p$ is not a weighted average of $\boldsymbol{\mu}^\pi$ and $\boldsymbol{\mu}^L$, with weights being $(w, 1-w)$, where $w= (1-c)^{-1}$. Some of the special cases that are ruled out from the assumptions above are as follows. (a) The case when $\boldsymbol{\mu}^p = \boldsymbol{\mu}^\pi$ or $\boldsymbol{\mu}^p =\boldsymbol{\mu}^L$ obviously violates the definition of DPP~\eqref{eqn:DPP_eqn}, thus does not result in DPP. (b) The case when $\boldsymbol{\mu}^{\pi} = \boldsymbol{\mu}^L$ results in DPP as long as $\boldsymbol{\mu}^p\neq \boldsymbol{\mu}^\pi$. This happens when the tail distributions of the prior and likelihood demonstrate certain properties, which is seen in numerical results of the Binomial example. (c) The case when $\boldsymbol{\mu}^p =w \boldsymbol{\mu}^\pi + (1-w)\boldsymbol{\mu}^L$ does not result in DPP as long as $w\neq 0$ or $1$, which is true since $c\neq 0$ or $\infty$. 


\tdplotsetmaincoords{60}{120}

\let\raarotold\raarot \let\rbarotold\rbarot
\let\rabrotold\rabrot \let\rbbrotold\rbbrot
\let\racrotold\racrot \let\rbcrotold\rbcrot

\let\raarot\racrotold \let\rbarot\rbcrotold
\let\rabrot\rabrotold \let\rbbrot\rbbrotold
\let\racrot\raarotold \let\rbcrot\rbarotold
\begin{figure}
    \centering
 \resizebox{6in}{4in}{ \begin{tikzpicture}[tdplot_main_coords]
  \fill[shade] (0,7.2,2.4)--(0,5.4,-2.7)--(0,0,0)--cycle;
  \fill[shade] (0,-3,-1)--(0,-2.2,1.1)--(0,0,0)--cycle;
  \draw[thick,->,gray] (0,0,0) -- (7,0,0) node[anchor=south]{$x$};
  \draw[thick,->,gray] (0,0,0) -- ( 0,7,0) node(Lpos)[anchor=west]{$y$};
  \draw[thick,-] (0,0,0) -- ( 0,-4,0) node(Lneg)[]{};
  \draw[thick,->,gray] (0,0,0) -- ( 0,0,7) node(pipos)[anchor=north east]{$z$};  
  \draw[thick,-] (0,0,0) -- ( 0,0,-7) node(pineg)[]{};  

  \pgfmathsetmacro{\ax}{5}
  \pgfmathsetmacro{\ay}{5}
  \pgfmathsetmacro{\az}{2}
  \draw[very thick,->,red] (0,0,0) -- (\ax,\ay,\az) node[anchor=west]{$\boldsymbol{\lambda}$};
  \draw[very thick,->,red, dashed] (0,0,0) -- (0,\ay,\az) node[anchor=north]{$\boldsymbol{\lambda}^{\rm proj}$};
  \draw[very thick,blue,->] (0,0,0) -- (0,3.9,6.5) node[anchor=west]{$\boldsymbol{\mu}^p-\boldsymbol{\mu}^L$};
  \draw[very thick,blue,->] (0,0,0) -- (0,-2,7) node[anchor=east]{$\boldsymbol{\mu}^p-\boldsymbol{\mu}^\pi$};
  \draw[ultra thick, blue,dashed] (0,7.2,2.4) -- (0,-3,-1) node[anchor=east]{$\mathcal{P}^{\pi}$};
  \draw[ultra thick, blue,dashed] (0,5.4,-2.7) -- (0,-2.2,1.1) node[anchor=east]{$\mathcal{P}^{L}$};
  \draw[dashed,gray,thick] (\ax,\ay,\az) -- (0,\ay,\az);
  \fill[blue,opacity=0.1] (0,0,13) -- (0,10,0) -- (0,0,-10) --  (0,-10,0)--cycle;
  \node at (0, -6, -1) {$\mathcal{S}^{\pi L}= {\rm Span}(\boldsymbol{\mu}^p-\boldsymbol{\mu}^L, \boldsymbol{\mu}^p-\boldsymbol{\mu}^\pi)$};
\end{tikzpicture}}
    \caption{Illustration of the geometry of DPP for general parameter dimensions. The light blue region, $\mathcal{S}^{\pi L}$, is the plane spanned by two linearly independent vectors $\boldsymbol{\mu}^p-\boldsymbol{\mu}^L$and $\boldsymbol{\mu}^p-\boldsymbol{\mu}^\pi$. The orthogonal spaces to $\boldsymbol{\mu}^p-\boldsymbol{\mu}^\pi$ and to $\boldsymbol{\mu}^p-\boldsymbol{\mu}^L$ within $\mathcal{S}^{\pi L}$ are respectively denoted by $\mathcal{P}^{\pi}$ and $\mathcal{P}^{L}$; thus $\mathcal{P}^{\pi}\perp \boldsymbol{\mu}^p-\boldsymbol{\mu}^\pi$ and  $\mathcal{P}^{L}\perp \boldsymbol{\mu}^p-\boldsymbol{\mu}^L$. The red vector is $\boldsymbol{\lambda}$ and the corresponding dashed vector is the projection of $\boldsymbol{\lambda}$ onto $\mathcal{S}^{\pi L}$, denoted by $\boldsymbol{\lambda}^{\rm proj}$. If $\boldsymbol{\lambda}^{\rm proj}$ lies in the two shaded triangular regions, the DPP does not occur; otherwise, the DPP occurs.}
    \label{fig:DPP_general}
\end{figure}

\section{DPP in Binomial Model Revisited}
\label{section:binomialmodel}

 The Binomial model employed by~\citet{xie2013incorporating} to analyze two-by-two contingency tables is covered by the LAN property in Section~\ref{section:conditions_DPP}, when the numbers of experimental trials go to infinity. We will not repeat the discussion for this case here, except to point out that in Appnedix~\ref{appendix:dpp_binomial}, we derive the DPP conditions for the Binomial model with Beta priors asymptotically. The condition corresponds well to the analytical forms in Example~\ref{proposition:2dimconstrast_heter_uncor_cov} in the exponential quadratic contrast case. This further demonstrates the generality of the exponential quadratic results, and offers the connection between the Beta-Binomial model with the Gaussian conjugate model.  
In practice, however, we care about the finite sample property of the inferential procedure, especially when the prior is moderately or highly informative. The case of finite numbers of trials does not fall into the realm of of exponential-quadratic likelihood. This section studies DPP in this finite sample scenario. 


Let $y_i \sim {\rm Binom}(n_i, p_i)$, $i=0, 1$, both $n_0$ and $n_1$ are finite. The parameter of interest is $\eta = p_1 - p_0$, for which we have ``some prior information''. Furthermore, we also have ``some prior information'' for $\alpha = p_0$. Using the notations in Section~\ref{sec:DPP_existence}, $\boldsymbol{\lambda}=(-1,1)^{\top}$ and $\boldsymbol{\theta} = (p_0, p_1)^{\top}$. This is an example given in~\citet{xie2013incorporating}. The likelihood is
\begin{align*}
L(p_0, \eta) &\propto p_0^{y_0} (1 - p_0)^{n_0- y_0} p_1^{y_1} (1 - p_1)^{n_1- y_1}\\
&= p_0^{y_0} (1 - p_0)^{n_0- y_0} (p_0+\eta)^{y_1} (1 - p_0-\eta)^{n_1- y_1}.
\end{align*}
The MLE of $p_0$ and $\eta$ are $\hat{p}_0 = \frac{y_0}{n_0}$ and $\hat{\eta} = \frac{y_1}{n_1} - \frac{y_0}{n_0}$. 
Let $\eta_0$ be the prior mean and $\eta^*$ be the posterior mode of $\eta$, then DPP occurs if and only if
\begin{equation}
\left[\eta^* - \left(\frac{y_1}{n_1} - \frac{y_0}{n_0}\right)\right] \left[\eta^* - \eta_0 \right] > 0. 
\label{eqn:dppbinomial}
\end{equation}
The (independent) conjugate prior for this model is given by $p_i \sim {\rm Beta}(a_i, b_i)$, $i = 0, 1$. This case is studied thoroughly in \citet{xie2013incorporating} thus we do not discuss this type of prior here. Instead, we focus on alternative priors and look at both theoretical and numerical results to offer insights on prior specification in non-Gaussian, non-linear models. Note that throughout this section, we consider the DPP under context that the posterior mode is the point estimate of the posterior, instead of the posterior mean; see definition~\ref{def:dpp}. 


\subsection{Theoretical Results}
\label{subsec:theory_binomial}

When both $n_0$ and $n_1$ are finite, the likelihood of the Binomial model is very different from exponential-quadratic type likelihoods thus we no longer have nice analytical solutions for the conditions of DPP as in Section~\ref{section:conditions_DPP}. To correspond as much as possible to the results obtained in Section~\ref{section:conditions_DPP}, we choose a truncated bivariate Gaussian prior for the parameters in the Binomial model and still try to express the posterior as a weighted average of prior mean and MLE. By doing so, we can examine the exact distinction (actually in terms of an extra residue term in the weighted average) of the Binomial model from the exponential-quadratic models. The truncation in the prior specification is adopted to account for constrained parameter space, i.e. $[0,1]$ for both $p_0$ and $p_1$, see e.g. \citet{balding2003likelihood} for an applied Bayesian analysis using truncated Gaussian prior in modeling probability parameter that lies in $[0,1]$. It is worth noting that we are working under the situation of known hyperparameters in the priors for $p_0$ and $p_1$, thus the normalization needed for the truncated Gaussian prior does not matter in the Bayesian inference (invariant to a known normalization constant).

Let the prior for $(\alpha=p_0, \eta=p_1-p_0)$ be truncated bivariate Gaussian with means $(\alpha_0, \eta_0)$ and variance-covariance matrix $\Sigma= \left( \begin{array}{cc}
\sigma_0^2 & r\sigma_1\sigma_0 \\
r\sigma_1\sigma_0 & \sigma_1^2
\end{array} \right)$. Assume that $\sigma_0>0, \sigma_1>0$ and $r\in[-1,1]$ are known constants. Thus the normalizing factor given by the truncation is a known constant. Proposition~\ref{proposition:postmode_partial_delta} rewrites the posterior mode as a weighted average of the prior mean and the MLE, plus an extra term, without which the DPP would not occur. 
\begin{proposition}[finite sample Binomial DPP]
For any fixed $r\in (-1, 1)$, the posterior mode $\eta^*$ satisfies
\begin{equation}
\eta = W_L \hat{\eta} + (1-W_L) \eta_0 +  W_{d} \left(\frac{y_0}{n_0} - \alpha_0\right),
\label{eqn:delta_partial_post_mode}
\end{equation}
where $I_0 = I(p_0, n_0)= \frac{n_0}{{p_0(1-p_0)}}$, $I_1 = I(p_1, n_1) = \frac{n_1}{p_1(1-p_1)}$, and
\begin{align*}
W_L &= W_L(p_0, p_1) = \frac{(1-r^2)I_0 I_1 + I_1[\frac{1}{\sigma_0^2} + \frac{r}{\sigma_0\sigma_1}]}{(1-r^2)I_0 I_1+ \frac{1}{\sigma_0^2\sigma_1^2 } + \frac{I_0}{\sigma_1^2}+ I_1 \left[\frac{1}{\sigma_0^2}+ 2\frac{r}{\sigma_0\sigma_1}+\frac{1}{\sigma_1^2}\right]}, \\
W_d &= W_d(p_0, p_1) = \frac{I_1\left(\frac{1}{\sigma_0^2} + \frac{r}{\sigma_0\sigma_1}\right)+ \frac{r}{\sigma_0\sigma_1} I_0 }{(1-r^2)I_0 I_1+ \frac{1}{\sigma_0^2\sigma_1^2} + \frac{I_0}{\sigma_1^2}+ I_1\left[\frac{1}{\sigma_0^2}+ 2\frac{r}{\sigma_0\sigma_1}+\frac{1}{\sigma_1^2}\right]}.
\end{align*}
Let ${y_0}/{n_0}\geq \alpha_0$ without loss of generality, then DPP occurs if and only if
\begin{align}
\eta_0 - \hat{\eta}  \in \left[ -  \frac{W_d}{1-W_L} \left(\frac{y_0}{n_0} - \alpha_0\right),  \frac{W_d}{W_L} \left(\frac{y_0}{n_0} - \alpha_0\right) \right]. \label{eqn:DPP_binomial_condition_gauss_prior}
\end{align}
Therefore, the severity of DPP depends on how large the interval on the right-hand-side is. 
\label{proposition:postmode_partial_delta}
\end{proposition}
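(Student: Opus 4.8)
The plan is to characterize the posterior mode of $(p_0,\eta)$ through its stationarity conditions and then recast those conditions algebraically in the advertised weighted-average form. Working conditionally on $r$, the posterior density of $(p_0,\eta)$ is proportional to $L(p_0,\eta)\,\pi(p_0,\eta)$ with $\pi$ the bivariate Gaussian density. On the open region $\{0<p_0<1,\; 0<p_0+\eta<1\}$ on which $L>0$, the Binomial log-likelihood is concave in $(p_0,p_1)$, hence concave in $(p_0,\eta)$ after the affine substitution $p_1=p_0+\eta$, while $\log\pi$ is strictly concave; so the log posterior has a unique maximizer $(p_0^{*},\eta^{*})$, interior (e.g.\ whenever $0<y_i<n_i$, since then the log posterior tends to $-\infty$ on the whole boundary) and thus characterized by a vanishing gradient. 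First I would write $\nabla[\log L+\log\pi]=\mathbf 0$ at $(p_0^{*},\eta^{*})$.

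The next step is to simplify the likelihood part of the gradient using the identity
\[
\frac{\partial}{\partial p}\big[\,y\log p+(n-y)\log(1-p)\,\big]=\frac{y-np}{p(1-p)}=I(p,n)\,(\hat p-p),\qquad \hat p=\frac{y}{n},
\]
together with $p_1=p_0+\eta$ and $\hat p_1=\hat p_0+\hat\eta$, so that $\hat p_1-p_1=(\hat p_0-p_0)+(\hat\eta-\eta)$. Writing $x:=p_0^{*}-\mu_0$, $z:=\eta^{*}-\eta_0$, $D_0:=\tfrac{y_0}{n_0}-\mu_0$, $D_\eta:=\hat\eta-\eta_0$, and using $\nabla\log\pi=-\Sigma^{-1}(p_0-\mu_0,\;\eta-\eta_0)^{\top}$, the two stationarity equations rearrange into the linear system
\[
\mathbf M\begin{pmatrix}x\\ z\end{pmatrix}=\begin{pmatrix}(I_0+I_1)D_0+I_1 D_\eta\\ I_1 D_0+I_1 D_\eta\end{pmatrix},
\]
where $I_0=I(p_0^{*},n_0)$, $I_1=I(p_1^{*},n_1)$ are the likelihood informations frozen at the mode --- which is why $W_L$ and $W_d$ end up being functions of the mode --- and $\mathbf M$ is symmetric, equal to the $2\times 2$ matrix with entries $(I_0{+}I_1,\,I_1;\,I_1,\,I_1)$ (determinant $I_0 I_1>0$) plus the positive-definite prior precision $\Sigma^{-1}$, hence positive definite with $\det\mathbf M>0$.

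I would then solve for $z$ by Cramer's rule. After pulling the factor $1/(1-r^2)$ out of $\Sigma^{-1}$, a direct expansion shows that $(1-r^2)\det\mathbf M$ equals exactly the common denominator displayed in $W_L$ and $W_d$, that the cofactor multiplying $D_\eta$ equals the numerator of $W_L$, and that the cofactor multiplying $D_0$ equals the numerator of $W_d$. This gives $z=W_L D_\eta+W_d D_0$, i.e.\ $\eta^{*}=W_L\hat\eta+(1-W_L)\eta_0+W_d\big(\tfrac{y_0}{n_0}-\mu_0\big)$, which is \eqref{eqn:delta_partial_post_mode}. I expect this cofactor bookkeeping to be the main obstacle: it is routine but must be executed carefully to reproduce the stated closed forms verbatim.

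Finally, for the DPP criterion I would substitute the weighted-average expression into \eqref{eqn:dppbinomial}. With $t:=\eta_0-\hat\eta$ and $c:=W_d\big(\tfrac{y_0}{n_0}-\mu_0\big)$ one gets $\eta^{*}-\hat\eta=(1-W_L)t+c$ and $\eta^{*}-\eta_0=-W_L t+c$, so \eqref{eqn:dppbinomial} becomes $\big[(1-W_L)t+c\big]\big[-W_L t+c\big]>0$, a downward parabola in $t$ with roots $-c/(1-W_L)$ and $c/W_L$. Using $0<W_L<1$ (read off from the explicit formula, or inferred from $\det\mathbf M>0$) and $W_d\ge 0$, together with the without-loss-of-generality normalization $\tfrac{y_0}{n_0}\ge\mu_0$ (so $c\ge 0$), positivity holds precisely for $t$ strictly between the two roots, which is the interval in \eqref{eqn:DPP_binomial_condition_gauss_prior}; its length $W_d\big(\tfrac{y_0}{n_0}-\mu_0\big)/\big[W_L(1-W_L)\big]$ quantifies the severity of the DPP. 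The one delicate point is keeping track of the sign of $W_d$, hence the orientation of the two endpoints; if $W_d<0$ the same root analysis applies with the endpoints interchanged.
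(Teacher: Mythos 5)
Your proposal is correct and follows essentially the same route as the paper's own proof: write the stationarity equations of the log posterior using $\partial_p[y\log p+(n-y)\log(1-p)]=I(p,n)(\hat p-p)$, solve the resulting $2\times 2$ linear system for the $\eta$-component (the paper does this by direct elimination on $(p_0,\eta)$, you by centering at $(\mu_0,\eta_0)$ and Cramer's rule, which is a cosmetic difference), and then factor the quadratic in $\eta_0-\hat\eta$ to read off the interval. Your cofactor bookkeeping indeed reproduces the stated $W_L$, $W_d$ and common denominator, and your caveat about the sign of $W_d$ is, if anything, more careful than the paper's treatment.
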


When $r=0$, we can simplify the expressions in Proposition~\ref{proposition:postmode_partial_delta}. 
    \begin{align*}
        \frac{W_d}{W_L} &= \frac{1}{1+I_0 \sigma_0^2},\quad \frac{W_d}{1-W_L} = \frac{I_1\sigma_1^2}{1+(I_1+I_0)\sigma_0^2}.
    \end{align*}
Therefore, the larger $I_0\sigma_0^2$ is (or the smaller $I_1\sigma_1^2$ is), the shorter the interval on the right-hand-side of~\eqref{eqn:DPP_binomial_condition_gauss_prior} is, thus the less likely the DPP occurs. 
For any fixed $r$, when $\sigma_0,\sigma_1\rightarrow\infty$, $W_d\rightarrow 0$ and $W_L\rightarrow 1$; thus $\eta\rightarrow\hat{\eta}$. This corresponds to flat priors for $p_0, p_1$. These results match the phenomena we observe in numerical simulations, see Section~\ref{subsec:numerical_studies_binomial} for details.\\


From Section~\ref{section:conditions_DPP}, in the asymptotic sense, i.e. in the Gaussian model, when the 2-dimensional marginal contrast is the quantity of interest, the DPP does not occur if the marginal variances are homogeneous (Proposition~\ref{proposition:2dimconstrast_hom_cov} in Section~\ref{section:conditions_DPP}). But this is not possible here since the marginal variances for $p_1$ and $p_0$ are determined by $n_1,n_0$ and their respective values, which can be very different. As we show in the numerical examples in Section~\ref{subsection:numerical_results_gaussian}, in cases of heterogeneous marginal variances, the correlation structure has a significant impact on the probability that DPP occurs. For the Binomial model, the only freedom that we have is on the prior means and covariance matrices. We speculate that imposing correlations through the prior may not completely resolve the DPP but might alleviate the phenomenon, i.e., alter (hopefully reduce) the probability of occurrence. 
We next examine the impact of various correlations on the priors of transformed $(p_0, p_1)$. 
The proposition below considers bivariate Gaussian priors for the logit transformed parameters $p_0, p_1$. More studies based on numerical experiments are given in Section~\ref{subsec:numerical_studies_binomial}.
\begin{proposition}[transformed parameters in Binomial model]\label{proposition:binomial_logit}
Let $\xi_i = {\rm logit} (p_i)$, $i = 0, 1$. Assume that the prior for $\boldsymbol{\xi} = (\xi_0,\xi_1)$ is bivariate normal with mean $\boldsymbol{\mu}$ and variance-covariance $\Sigma = \left( \begin{array}{cc}
\sigma_0^2 & r\sigma_1\sigma_0 \\
r\sigma_1\sigma_0 & \sigma_1^2
\end{array} \right)$. Assume that $\sigma_1$ and $\sigma_2$ are known and that $r$ has a uniform prior on $(-1,1)$. Then at the posterior mode, $(p_1, r, p_0)$ satisfies
\begin{align*}
&\left(\frac{y_0}{n_0} - p_0\right)\left(\frac{y_1}{n_1} - p_1\right)  = -\frac{r}{n_0 n_1\sigma_0\sigma_1 (1-r^2)}.
\end{align*}
\end{proposition}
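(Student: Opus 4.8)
The plan is to write the joint log-posterior of $(\theta_0,\theta_1,r)$, impose the three first-order conditions at the mode, and combine them. Since the prior on $r$ is uniform it contributes only a constant, but because $r$ is itself being optimized, the Gaussian normalizer $\det\Sigma=\sigma_0^2\sigma_1^2(1-r^2)$ must be retained. With $p_i=e^{\theta_i}/(1+e^{\theta_i})$ and $\boldsymbol{u}=\boldsymbol{\theta}-\boldsymbol{\mu}$, the log-posterior equals, up to an additive constant,
\begin{equation*}
\sum_{i=0}^{1}\bigl[y_i\theta_i-n_i\log(1+e^{\theta_i})\bigr]\;-\;\tfrac12\log(1-r^2)\;-\;\frac{1}{2(1-r^2)}\Bigl[\frac{u_0^2}{\sigma_0^2}-\frac{2r\,u_0u_1}{\sigma_0\sigma_1}+\frac{u_1^2}{\sigma_1^2}\Bigr],
\end{equation*}
using that $(\Sigma^{-1})_{11}=\sigma_0^{-2}/(1-r^2)$, $(\Sigma^{-1})_{22}=\sigma_1^{-2}/(1-r^2)$, and $(\Sigma^{-1})_{12}=-r/(\sigma_0\sigma_1(1-r^2))$.

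First I would differentiate in $\theta_0$ and $\theta_1$, using that the derivative of the log-likelihood in $\theta_i$ is $y_i-n_ip_i$. The two stationarity equations read
\begin{align*}
y_0-n_0p_0&=\frac{1}{1-r^2}\Bigl[\frac{u_0}{\sigma_0^2}-\frac{r\,u_1}{\sigma_0\sigma_1}\Bigr], & y_1-n_1p_1&=\frac{1}{1-r^2}\Bigl[\frac{u_1}{\sigma_1^2}-\frac{r\,u_0}{\sigma_0\sigma_1}\Bigr].
\end{align*}
Setting $A=u_0^2/\sigma_0^2+u_1^2/\sigma_1^2$ and $B=u_0u_1/(\sigma_0\sigma_1)$, multiplying these two equations and collecting terms gives
\begin{equation*}
n_0n_1\Bigl(\frac{y_0}{n_0}-p_0\Bigr)\Bigl(\frac{y_1}{n_1}-p_1\Bigr)\;=\;\frac{1}{(1-r^2)^2}\cdot\frac{B(1+r^2)-rA}{\sigma_0\sigma_1}.
\end{equation*}

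Next I would differentiate in $r$. Since $-\tfrac12\log(1-r^2)$ has derivative $r/(1-r^2)$ and the quadratic form equals $(A-2rB)/(2(1-r^2))$, a short quotient-rule computation shows the $r$-stationarity condition is $r(1-r^2)-rA+B(1+r^2)=0$, equivalently $B(1+r^2)-rA=-r(1-r^2)$. Substituting this into the displayed product identity cancels one factor of $(1-r^2)$ and leaves
\begin{equation*}
\Bigl(\frac{y_0}{n_0}-p_0\Bigr)\Bigl(\frac{y_1}{n_1}-p_1\Bigr)\;=\;-\frac{r}{n_0n_1\,\sigma_0\sigma_1(1-r^2)},
\end{equation*}
which is the claim. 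The one mildly delicate point is the $r$-derivative: the $\log(1-r^2)$ term from the Gaussian normalizer must be kept, and $(A-2rB)/(2(1-r^2))$ differentiated with care; the payoff is that the resulting relation is exactly what is needed to eliminate the $A,B$ dependence in the product of the two coordinate conditions, leaving a closed identity in $(p_0,p_1,r)$ alone. I would also note that this identity is a necessary condition at any interior stationary point of the joint log-posterior, the posterior mode being one such point, provided the maximizing $r$ is not attained at $\pm1$.
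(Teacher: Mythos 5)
Your proposal is correct and follows essentially the same route as the paper: write the joint log-posterior keeping the $-\tfrac12\log(1-r^2)$ term from the Gaussian normalizer, take the first-order conditions in $\theta_0$, $\theta_1$, and $r$, then multiply the two coordinate conditions and substitute the $r$-condition. Your $B(1+r^2)-rA=-r(1-r^2)$ is exactly the paper's condition $\phi_0\phi_1=r/(1-r^2)$ in different notation, so the two arguments coincide.
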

It shows that a positive correlation on the priors between logit-transformed $p_0, p_1$ incurs a negative correlation between the residuals, $\frac{y_0}{n_0} - p_0=\hat{p}_0-p_0$ and $\frac{y_1}{n_1} - p_1=\hat{p}_1-p_1$, on the posterior, and vice versa. Moreover, if we set $r = 0$, then at the posterior mode, either $p_0=y_0/n_0$ or $p_1 = y_1/n_1$, which is the MLE for $p_0$ or $p_1$. This is likely to result in DPP on $\eta = p_1 - p_0$, since the posterior mode already coincides with MLE. This heuristic argument gives partial evidence towards setting correlated priors for (transformed) $p_0$ and $p_1$ to alleviate DPP. We present numerical results on this situation to demonstrate our intuitions. 

\subsection{Numerical Results}
\label{subsec:numerical_studies_binomial}
Given the theoretical discussions in Section~\ref{subsec:theory_binomial}, we demonstrate the DPP for Binomial models via the numerical examples. 
Table~\ref{tab:binomial_example} summarizes the results under different prior specifications, corresponding to those given in Section~\ref{subsec:theory_binomial}. 

\begin{table}[tbph]
    \centering
    \begin{tabular}{c|cccc}
       Prior  & Posterior Mean & Posterior Median & Posterior Interval (95\%)  \\
       \hline
       Indep. Conj. & 0.237 & 0.240 & [0.094, 0.382] \\
       Gauss A $r = 0$ & 0.314 & 0.315 & [0.195, 0.427]\\
       Gauss A $r = 0.2$ &  0.325 & 0.325 & [0.213, 0.445]\\
       Gauss A, $r = -0.2$ & 0.308 & 0.310 & [0.189, 0.420] \\
       Gauss A, $r = 0.8$ & 0.381 &
0.381 & [0.296, 0.460] \\
       Gauss A, $r = -0.8$ & 0.250 & 0.247 & [0.153, 0.357]\\
       Gauss A, $r = 0.95$ &  0.403 & 0.402 & [0.345, 0.463]\\
       Gauss A, $r = -0.95$ & 0.200 & 0.203 & [0.121, 0.276] \\
       Gauss B, $r = 0$ & 0.094 & 0.097 & [-0.064, 0.248] \\
       Gauss B, $r = 0.2$ &  0.096 & 0.092 & [-0.061, 0.265]\\
       Gauss B, $r = -0.2$ & 0.092 & 0.093 & [-0.082, 0.249]\\
       Gauss B, $r = 0.8$ & 0.098 & 0.097 & [-0.059, 0.254] \\
       Gauss B, $r = -0.8$ & 0.099 & 0.102 & [-0.069, 0.250]\\
       Gauss B, $r = 0.95$ & 0.105 & 0.106 & [-0.035, 0.242] \\
       Gauss B, $r = -0.95$ & 0.119 &  0.120 & [-0.047, 0.282] \\
       \hline
    \end{tabular}
    \caption{Posteriors of $\eta=p_1-p_0$ in the Binomial model with independent conjugate priors with hyperparameters $(a, b)$ (row 1) and bivariate Gaussian priors (rows $2-15$) on $(p_0, p_1)$, where $r\in[-1,1]$ is the prior correlation between $p_0$ and $p_1$. The prior means are all given by $a/(a+b)$ ($2\times 1$ vector) and the marginal prior variances are given by $2\times 2$ diagonal matrices with diagonal elements given by $ab/((a+b)^2 (a+b+1))$ in ``Gauss A'' and $(0.5^2, 0.5^2)$ in ``Gauss B'', where $a = (14.66, 46.81), b = (4.88, 4.68)$. These hyperparameter choices are based on the example given in~\citet{xie2013incorporating}. The prior mean of $\eta$ is equal to $0.159$ and the MLE of $\eta$ is equal to $0.096$.}
    \label{tab:binomial_example}
\end{table}
As we can see from Table~\ref{tab:binomial_example}, having apriori negative correlations between $p_0$ and $p_1$ can alleviate the DPP though cannot diminish it, while the situation is worse when there is positive correlation between $p_0$ and $p_1$ a priori.  Having larger prior variances (Gauss B as opposed to Gauss A) can alleviate the DPP too: the prior impact becomes more and more negligible with a larger and larger prior variance. And in fact, in our case, the Gauss B variance is large enough to have eliminated the DPP in several cases (ones with large absolute correlations). Furthermore, we note that we also list the posterior intervals in Table~\ref{tab:binomial_example}, just to illustrate that the posterior interval covers the prior mean and MLE in some cases while misses one or both in other cases. Again, we do not discuss alternative definitions of DPP that extends the framework beyond point estimates in this paper. However, the lengths of the posterior intervals, together with the posterior mean values, reveal how bad/moderate the DPP is in each different case.

Next we examine some other more ``flexible'' priors (with either only unknown prior parameter $r$ or both unknown $\boldsymbol\sigma$ and $r$) on this Binomial example.  We can see from Table~\ref{tab:binomial_continued_otherpriors} that if we do not fix the prior variances and fit the variance parameters, the DPP does not occur. This is because the data is providing information to the prior specification thus the prior and likelihood are more aligned. And DPP is avoided.

\begin{table}[tbph]
    \centering
    \begin{tabular}{c|cccc}
       Transformation & Prior Var. & Post. Mean & Post. Interval (95\%) & Est. $r$ (95\% Post.)\\
       \hline
       None & A, Unknown $r$ & 0.388 & [0.297, 0.458] & 0.872 ([0.452, 0.992]) \\ 
       None & B, Unknown $r$ & 0.103 & [-0.065, 0.250] & 0.458 ([-0.552, 0.992])\\ 
None & Unknown $(\boldsymbol{\sigma},r)$ & 0.096& [-0.068, 0.274] & 0.254 ([-0.856, 0.990]) \\
Logit & Unknown $(\boldsymbol{\sigma},r)$ & 0.117 &[-0.032, 0.255] & 0.688 ([-0.036, 0.984])\\
\hline
    \end{tabular}
    \caption{The same Binomial model as in Table~\ref{tab:binomial_example} with different priors. The prior mean is equal to $0.159$ and MLE is equal to $0.096$. ``A'' represents when $\boldsymbol{\sigma}^2 = ab/((a+b)^2(a+b+1))$ and ``B'' represents when $\boldsymbol{\sigma}^2 = (0.5^2, 0.5^2)$. When $\boldsymbol{\sigma}$ is unknown, the prior is given by independent Gamma with hyperparameters $(10, 10)$. When the covariance matrix is unknown (both $\boldsymbol{\sigma}$ and $\rho$ unknown), the covariance matrix is given flat prior.}
    \label{tab:binomial_continued_otherpriors}
\end{table}

In summary, by studying the DPP for the Binomial model both theoretically and numerically, we confirm our conjecture that the guidance given by studying the DPP for exponential-quadratic likelihoods can also be applied when the likelihood is far from being exponential-quadratic. The DPP for general likelihood cases are slightly more tricky than exponential-quadratic cases, as we demonstrate in this section. Thus practitioners working with highly non-exponential-quadratic likelihoods shall be more cautious and be aware of the pitfalls of using informative priors.

\section{Conclusions and Discussions}
\label{section:summary}

In this paper, we derive conditions for DPP under exponential quadratic likelihoods and demonstrate DPP using numerical experiments for Gaussian and Binomial models. The investigations on DPP are of interest in applications and have practical implications on the choice of priors, especially informative priors, and normalization (pre-processing) of data. 

From studying the DPP under the exponential-quadratic likelihood, we recommend the following practical guidelines to help set up models to avoid or mitigate DPP for Bayesian analysis: {(I)}
having uncorrelated dimensions for the parameters, in both the prior and likelihood, is desired to alleviate or avoid DPP; 
{(II)}
re-scale or re-parameterize such that the prior is not super skewed, and homogeneous variance across dimensions is desired; 
{(III)} transform data such that the likelihood is not highly skewed and, similarly, homogeneous variance across dimensions is desired; {(IV)} 
in case the different dimensions of the parameter are correlated in both the prior and the likelihood, making sure that the correlation patterns are the same between prior and likelihood could alleviate or prevent DPP. With a digression from the exponential-quadratic likelihood, we also show through the Binomial model that the suggestions above shall still help alleviate DPP in highly non-linear non-Gaussian cases. Furthermore, setting hyper-priors could be helpful when DPP is suspected: the data would inform the estimation of the hyperparameters in the prior such that the DPP is mitigated while not fully avoided. This is reflected from numerical studies of the Binomial models. 

Several of these guidelines are consistent with a number of current practices in Bayesian inference to ease computational burden. 
Example~\ref{proposition:2dimconstrast_hom_cov} shows that, when possible, normalization of the data, together with proper re-scaling of the parameters prior to analysis to make the marginal variances of the parameters close to being homogeneous, is a recommended step for avoiding/mitigating the DPP. In practice, for most computational algorithms~\citep{robert2013monte}, such as the Markov chain Monte Carlo (MCMC, see~\citet{liu2008monte} and references therein), it is also easier to tune if the different parameters lie on similar scales, such as being close to standard Gaussian distribution in the model. A concrete example is the Neal's Funnel~\citep{papaspiliopoulos2007general} in the Hamiltonian Monte Carlo~\citep{neal2011mcmc} implementation in the Stan package~\citep{hoffman2014no}, it is demonstrated that ``reparameterization can dramatically increase effective sample size for the same number of iterations or even make programs that would not converge well behaved''; see the reparameterization section in the \texttt{Stan User's Guide}~\citep{stan2018}. Thus the geometry of the prior-likelihood alignment impacts both the behavior of the Bayes estimator and the performance of computational algorithms for posterior sampling. In this paper, from the perspective of avoiding potential DPP issue, we are reassuring the importance of these guidance for practitioners of Bayesian inference, especially under informative priors. 

Finally, we would like to point out that although this paper focused on the DPP for point estimation only, the phenomenon extends to set and distributional inference as well. \citet{xie2013incorporating} demonstrated the DPP with the Binomial example using credible and confidence intervals at different levels. We discuss the DPP in point estimation, focusing on the relationship between the posterior mean, the prior mean and the MLE,  to provide a simplified and essential insight into the phenomenon. Extended investigation of the DPP in set and distributional inference and their respective practical implication are left to future work.



\bibliographystyle{imsart-nameyear}
\bibliography{paper-ref}

\begin{appendix}

\section{Proof of Theorem~\ref{theorem:conditionsDPP_gauss}}\label{appendix:conditionsDPP_gauss}
The posterior distribution for $\boldsymbol{\theta}$ is 
\begin{equation*}
\mathcal{N}\left(\boldsymbol{\theta}; \Sigma^{p}\left[\left(\Sigma^{\pi}\right)^{-1}\boldsymbol{\mu}^{\pi} + \left(\Sigma^{L}\right)^{-1}\boldsymbol{\mu}^L\right], \Sigma^{p} = \left[(\Sigma^{\pi})^{-1}+(\Sigma^L)^{-1}\right]^{-1}\right).
\end{equation*}
The posterior distribution for $\eta=\boldsymbol{\lambda}^\top\boldsymbol{\theta}$ is then
\begin{equation*}
\mathcal{N}\left(\boldsymbol{\theta}; \boldsymbol{\lambda}^{\top}\Sigma^{p}\left(\Sigma^{\pi}\right)^{-1}\boldsymbol{\mu}^{\pi} + \boldsymbol{\lambda}^{\top}\Sigma^{p}\left(\Sigma^{L}\right)^{-1}\boldsymbol{\mu}^L, \boldsymbol{\lambda}^{\top}\Sigma^{p}\boldsymbol{\lambda}\right).
\end{equation*}
The discrepant posterior phenomenon (DPP) does not occur if and only if 
$$\left(\mathbb{E}_{\rm post}(\boldsymbol{\lambda}^{\top}\boldsymbol{\theta}) - \boldsymbol{\lambda}^{\top} \boldsymbol{\mu}^{\pi}\right)\left(\mathbb{E}_{\rm post}(\boldsymbol{\lambda}^{\top}\boldsymbol{\theta}) - \boldsymbol{\lambda}^{\top} \boldsymbol{\mu}^{L}\right)\leq 0,$$
in other words, DPP occurs if 
\begin{equation*}
\mathbb{E}_{\rm post}(\boldsymbol{\lambda}^{\top}\boldsymbol{\theta}) :=\boldsymbol{\lambda}^{\top}\Sigma^{p}\left(\Sigma^{\pi}\right)^{-1}\boldsymbol{\mu}^{\pi} + \boldsymbol{\lambda}^{\top}\Sigma^{p}\left(\Sigma^{L}\right)^{-1}\boldsymbol{\mu}^L \quad
\left\{
\begin{array}{c}
> \max\left\{ \boldsymbol{\lambda}^{\top} \boldsymbol{\mu}^{\pi}, \boldsymbol{\lambda}^{\top} \boldsymbol{\mu}^{L} \right\}\vspace{6pt}\\
\text{ or } < \min\left\{ \boldsymbol{\lambda}^{\top} \boldsymbol{\mu}^{\pi}, \boldsymbol{\lambda}^{\top} \boldsymbol{\mu}^{L} \right\}
\end{array}
\right.
\end{equation*}

\section{Proof of Theorem~\ref{cor1}}
Since both $\boldsymbol{\lambda}^{\top}\Sigma^{p}\Lambda^{-1}\left(\overline{\bf y}_n - \boldsymbol{\mu}^{\pi}\right)$ and $\left(\overline{\bf y}_n - \boldsymbol{\mu}^{\pi}\right)^{\top}\left(\Sigma^{\pi}\right)^{-1} \Sigma^{p}\boldsymbol{\lambda}$ follows univariate Gaussian distributions under the data generating model, the probability that DPP occurs is equal to zero if and only if they are perfectly positively correlated, i.e. there exists a positive constant $c$ such that $\boldsymbol{\lambda}^{\top}\Sigma^{p}(\Lambda^{-1} - c \left(\Sigma^{\pi}\right)^{-1} )\left(\overline{\bf y}_n - \boldsymbol{\mu}^{\pi}\right) = 0$ holds with probability $1$. This implies that $\boldsymbol{\lambda}^{\top}\Sigma^{p}(\Lambda^{-1} - c \left(\Sigma^{\pi}\right)^{-1})=0$. 

\section{Proof of Theorem~\ref{theorem:gaussian_existence}}
\label{appendix:gaussian_existence_proof}

The proof of the theorem is straightforward, and we briefly explain it here. If $\boldsymbol{\mu}^{\pi} = \boldsymbol{\mu}^L$, then $\boldsymbol{\mu}^p = \boldsymbol{\mu}^\pi$ thus the DPP does not appear. From Section~\ref{appendix:geometry_dpp_general}, if $\boldsymbol{\mu}^p-\boldsymbol{\mu}^\pi$ and $\boldsymbol{\mu}^p-\boldsymbol{\mu}^L$  does not lie on the same line, there exists a nontrivial space for possible $\boldsymbol{\lambda}$ such that the DPP occurs. In the Gaussian setting, $\boldsymbol{\mu}^p= \Sigma^{p}\left[\left(\Sigma^{\pi}\right)^{-1}\boldsymbol{\mu}^{\pi} + \left(\Lambda/n\right)^{-1}\boldsymbol{\mu}^L\right]$, where $\left({\Sigma^{p}}\right)^{-1} = \left(\Sigma^{\pi}\right)^{-1} + \left(\Lambda/n\right)^{-1}$. Note that the vectors  $\boldsymbol{\mu}^p -\boldsymbol{\mu}^\pi$ and $\boldsymbol{\mu}^p-\boldsymbol{\mu}^L$ lying on the same line is equivalent to saying that there exist some constant $c\neq 0$, such that $(1-c)\boldsymbol{\mu}^p =  \boldsymbol{\mu}^\pi -c\boldsymbol{\mu}^L$. This is equivalent to the following linear equation: 
\begin{align}\label{eq:thm1}
 \left( c\left(\Sigma^{\pi}\right)^{-1} + \left(\Lambda/n\right)^{-1}\right) (\boldsymbol{\mu}^L - \boldsymbol{\mu}^\pi)=0.
\end{align}
This is a linear equation for sample mean $\boldsymbol{\mu}^L = \overline{\bf y}_n$. Thus, for a given $\boldsymbol{\mu}^\pi$ and as long as $\left( c\left(\Sigma^{\pi}\right)^{-1} + \left(\Lambda/n\right)^{-1}\right) \not = 0$, the probability that equation (\ref{eq:thm1}) holds is zero. Thus the DPP prevalence holds with probability $1$ as long as $\Lambda$ is not a multiplier of $\Sigma^\pi$. 

\section{Proofs of Examples~\ref{proposotion:generaldim_Gaussian_diagonal_cov} and~\ref{proposotion:generaldim_Gaussian_homo_cor}}

\begin{proof}
\textbf{\ref{proposotion:generaldim_Gaussian_diagonal_cov}.} This is straightforward from Theorem~\ref{theorem:conditionsDPP_gauss} thus detailed proof is omitted.\\
\textbf{\ref{proposotion:generaldim_Gaussian_homo_cor}.} From Sherman-Morrison formula, we have 
\begin{equation*}
(\Sigma^{\pi})^{-1} = \frac{\sigma_{\pi}^{-2}}{1-r} \left[I_d - \frac{r}{rd + 1 - r} \boldsymbol{1}_d \boldsymbol{1}_d^{\top}\right]\quad \text{and} \quad(\Sigma^{L})^{-1} = \frac{\sigma_{L}^{-2}}{1-\rho} \left[I_d - \frac{\rho}{\rho d + 1 - \rho} \boldsymbol{1}_d \boldsymbol{1}_d^{\top}\right].
\end{equation*}
Therefore, 
\begin{align*}
\Sigma^{p} (\Sigma^L)^{-1} 
&= W_{r\rho} I_d - C_{r\rho}\boldsymbol{1}_d \boldsymbol{1}_d^{\top},\\
\Sigma^{p} (\Sigma^\pi)^{-1} &= (1-W_{r\rho}) I_d + C_{r\rho}\boldsymbol{1}_d \boldsymbol{1}_d^{\top}.
\end{align*}
Thus $\Delta_1
= W_{r\rho} d_{L\pi}^{(1)} - C_{r\rho} d_{L\pi}^{(2)}$ and $\Delta_2
= (1-W_{r\rho}) d_{L\pi}^{(1)} + C_{r\rho} d_{L\pi}^{(2)}$. Therefore, 
\begin{align*}
\Delta_1 \Delta_2 &= W_{r\rho}(1-W_{r\rho})\left[d_{L\pi}^{(1)}\right]^2 - C_{r\rho}^2\left[d_{L\pi}^{(2)}\right]^2 + C_{r\rho} (2 W_{r\rho} - 1)d_{L\pi}^{(1)}d_{L\pi}^{(2)}.
\end{align*}
The rest follows directly from Theorem~\ref{theorem:conditionsDPP_gauss}. 
\end{proof}

\section{Statement and Proof of Corollary to Proposition~\ref{proposition:2dimconstrast_hom_cov}}
\label{appendix:statement_proof_gauss_homo_corollary}

\begin{corollary} 
Let $\Sigma^{\pi} = \sigma_{\pi}^2 {\rm diag}(1, 1)$, $\Sigma^{L} = \sigma_{L}^2\left(\begin{array}{cc}
 1 & \rho\\ \rho & 1
 \end{array}\right)$. Then (1) the posterior distribution for $\theta_1 - \theta_2$ is $\mathcal{N}(\Delta^*, 2(1-\rho)\left[(1-\rho)\sigma_{\pi}^{-2} + \sigma_L^{-2}\right]^{-1})$, where $\Delta^* = w_{\pi} \Delta_{\pi} + w_L \Delta_L$, $w_{\pi}= \frac{(1-\rho)\sigma_{\pi}^{-2}}{(1-\rho)\sigma_{\pi}^{-2} +  \sigma_{L}^{-2}}$, and $w_L = 1-w_{\pi}$; and (2) $\min\{\Delta_{\pi}, \Delta_{L}\}\leq \Delta^*\leq \max\{\Delta_{\pi}, \Delta_{L}\}$, i.e. no DPP. Same results hold when $\Sigma^{L} = \sigma_{L}^2 {\rm diag}(1, 1)$, $\Sigma^{\pi} = \sigma_{\pi}^2\left(\begin{array}{cc}
 1 & \rho\\ \rho & 1
 \end{array}\right)$ due to the symmetry of likelihood and prior. 
\label{corollary:gaussian_homo}
\end{corollary}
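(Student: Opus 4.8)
The plan is to reduce the computation to a one-dimensional conjugate-normal update by exploiting that $\boldsymbol{\lambda} = (1,-1)^\top$ is a common eigenvector of $\Sigma^\pi$ and $\Sigma^L$. First I would record that $\Sigma^\pi\boldsymbol{\lambda} = \sigma_\pi^2\boldsymbol{\lambda}$ and $\Sigma^L\boldsymbol{\lambda} = \sigma_L^2(1-\rho)\boldsymbol{\lambda}$, so $\boldsymbol{\lambda}$ is an eigenvector of $(\Sigma^\pi)^{-1}$ and $(\Sigma^L)^{-1}$ with eigenvalues $\sigma_\pi^{-2}$ and $\sigma_L^{-2}(1-\rho)^{-1}$. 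Since all of these matrices are symmetric, $\boldsymbol{\lambda}$ is simultaneously a left and right eigenvector, hence an eigenvector of $(\Sigma^p)^{-1} = (\Sigma^\pi)^{-1} + (\Sigma^L)^{-1}$ with eigenvalue $\kappa := \sigma_\pi^{-2} + \sigma_L^{-2}(1-\rho)^{-1}$, and of $\Sigma^p$ with eigenvalue $\kappa^{-1}$.

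Part (1) is then a short computation. The posterior variance of the contrast is $\boldsymbol{\lambda}^\top\Sigma^p\boldsymbol{\lambda} = \kappa^{-1}\boldsymbol{\lambda}^\top\boldsymbol{\lambda} = 2\kappa^{-1}$, which I would rewrite (multiplying numerator and denominator by $1-\rho$) as $2(1-\rho)[(1-\rho)\sigma_\pi^{-2} + \sigma_L^{-2}]^{-1}$. For the mean, I would substitute $\boldsymbol{\mu}^p = \Sigma^p[(\Sigma^\pi)^{-1}\boldsymbol{\mu}^\pi + (\Sigma^L)^{-1}\boldsymbol{\mu}^L]$ and use the left-eigenvector identities $\boldsymbol{\lambda}^\top\Sigma^p(\Sigma^\pi)^{-1} = \kappa^{-1}\sigma_\pi^{-2}\boldsymbol{\lambda}^\top$ and $\boldsymbol{\lambda}^\top\Sigma^p(\Sigma^L)^{-1} = \kappa^{-1}\sigma_L^{-2}(1-\rho)^{-1}\boldsymbol{\lambda}^\top$ to obtain $\boldsymbol{\lambda}^\top\boldsymbol{\mu}^p = \kappa^{-1}\sigma_\pi^{-2}\Delta_\pi + \kappa^{-1}\sigma_L^{-2}(1-\rho)^{-1}\Delta_L$. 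Simplifying the two scalar coefficients (again multiplying by $1-\rho$) identifies them with $w_\pi$ and $w_L = 1-w_\pi$ as in the statement, so $\boldsymbol{\lambda}^\top\boldsymbol{\mu}^p = \Delta^* = w_\pi\Delta_\pi + w_L\Delta_L$; and since $-1<\rho<1$ gives $1-\rho>0$, both weights lie in $(0,1)$.

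Part (2) follows immediately: $\Delta^*$ is a genuine convex combination of $\Delta_\pi$ and $\Delta_L$, hence lies in $[\min\{\Delta_\pi,\Delta_L\},\max\{\Delta_\pi,\Delta_L\}]$, which by Definition~\ref{def:dpp} means the DPP does not occur. Equivalently, one may invoke Theorem~\ref{theorem:conditionsDPP_gauss} directly, since the same eigenvector identities give $\Delta_1 = w_L(\Delta_L-\Delta_\pi)$ and $\Delta_2 = w_\pi(\Delta_L-\Delta_\pi)$, which share a sign. Part (3) requires no new argument: the derivation used only that $\Sigma^\pi$ is a scalar matrix and $\Sigma^L$ has equal diagonal entries, so that $\boldsymbol{\lambda}$ is their common eigenvector, and this hypothesis is symmetric under $\pi\leftrightarrow L$. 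I do not expect a genuine obstacle here; the one point meriting care is that $\Sigma^p$ need not commute with $(\Sigma^\pi)^{-1}$ or $(\Sigma^L)^{-1}$, so I would route the mean computation through the left-eigenvector identity for $\boldsymbol{\lambda}$ rather than attempting to diagonalize the three matrices simultaneously.
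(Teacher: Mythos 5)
Your proposal is correct, and it reaches the result by a genuinely different route than the paper. The paper's proof (Appendix~\ref{appendix:statement_proof_gauss_homo_corollary}) is computational: it explicitly inverts $\Sigma^{\pi}$ and $\Sigma^{L}$, assembles $\Sigma^{p}$, applies the transformation $T=\left(\begin{smallmatrix}1 & -1\\ 1 & 1\end{smallmatrix}\right)$ through Lemmas~\ref{lemma:bivariateNormaldiff} and~\ref{lemma:bivariatenormal_plp}, and reads off the first coordinate of $T\Sigma^{p}(\Sigma^{\pi})^{-1}\boldsymbol{\mu}^{\pi}+T\Sigma^{p}(\Sigma^{L})^{-1}\boldsymbol{\mu}^{L}$ together with $(T\Sigma^{p}T^{\top})_{11}$, arriving at the same weights $w_{\pi}$, $w_L$ and variance before closing with the convex-combination inequality. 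You instead observe that $\boldsymbol{\lambda}=(1,-1)^{\top}$ is a common eigenvector of $\Sigma^{\pi}$ and $\Sigma^{L}$ with eigenvalues $\sigma_{\pi}^{2}$ and $\sigma_{L}^{2}(1-\rho)$, so that $\boldsymbol{\lambda}^{\top}\Sigma^{p}(\Sigma^{\pi})^{-1}$ and $\boldsymbol{\lambda}^{\top}\Sigma^{p}(\Sigma^{L})^{-1}$ collapse to scalar multiples of $\boldsymbol{\lambda}^{\top}$; the whole posterior update along the contrast direction becomes a one-dimensional conjugate-normal calculation, and the weights and variance drop out with almost no algebra. Your verification of the identities is sound (and your caution about non-commutativity is harmless; here all matrices are polynomials in $\boldsymbol{1}_2\boldsymbol{1}_2^{\top}$ and do commute, but the left-eigenvector route makes this irrelevant). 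What your approach buys is brevity, a transparent reason why no DPP can occur (the contrast direction diagonalizes both geometries simultaneously), and immediate generality: the same argument covers the fully correlated case of Example~\ref{proposition:2dimconstrast_hom_cov} and higher-dimensional contrasts with $\boldsymbol{\lambda}^{\top}\boldsymbol{1}_d=0$ in the equicorrelation setting of Example~\ref{proposotion:generaldim_Gaussian_homo_cor}, as well as the swapped prior/likelihood case in part (3). What the paper's computation buys is the full transformed posterior (both coordinates and the complete covariance after applying $T$), which your projection argument does not produce; your alternative closing via $\Delta_1=w_L(\Delta_L-\Delta_{\pi})$ and $\Delta_2=w_{\pi}(\Delta_L-\Delta_{\pi})$ in Theorem~\ref{theorem:conditionsDPP_gauss} is also valid and matches the paper's general machinery.
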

\begin{proof}
We use the same notations as in Lemmas~\ref{lemma:bivariateNormaldiff} and~\ref{lemma:bivariatenormal_plp}. In this case, $$\left(\Sigma^\pi\right)^{-1} = \sigma_{\pi}^{-2} \left(\begin{array}{cc}
 1 & 0\\ 0 & 1
 \end{array}\right)\quad \text{and}\quad \left(\Sigma^L\right)^{-1} = \frac{\sigma_{L}^{-2}}{1-\rho^2}\left(\begin{array}{cc}
 1 & -\rho\\ -\rho & 1
 \end{array}\right).$$
 Consequently, we have
 \begin{align*}
  \Sigma^{p}  &= \left[\sigma_{\pi}^{-2} + \frac{\sigma_L^{-2}}{1-\rho^2}\right]^{-1} \frac{1}{1-r^2} \left(\begin{array}{cc}
 1 & -r\\ -r & 1
 \end{array}\right), r = \frac{-\frac{\rho}{1-\rho^2}\sigma_L^{-2}}{\sigma_{\pi}^{-2}+\frac{\sigma_L^{-2}}{1-\rho^2}}.\\
 T\Sigma^{p} T^{\top} &=\left[\sigma_{\pi}^{-2} + \frac{\sigma_L^{-2}}{1-\rho^2}\right]^{-1} \frac{1}{1-r^2} \left(\begin{array}{cc}
 2+2r & 0\\ 0 & 2-2r
 \end{array}\right). \\
 \left(T\Sigma^{p} T^{\top}\right)_{11} &=\left[\sigma_{\pi}^{-2} + \frac{\sigma_L^{-2}}{1-\rho^2}\right]^{-1} \frac{2}{1-r} = 2\left[\sigma_{\pi}^{-2} + \frac{\sigma_L^{-2}}{1-\rho}\right]^{-1}. 
 \end{align*}
 Furthermore, we have
 \begin{align*}
 T \Sigma^{p} \left(\Sigma^{\pi}\right)^{-1} \boldsymbol{\mu}^{\pi} & 
 = \left(1+\frac{\sigma_\pi^2\sigma_{L}^{-2}}{1-\rho^2}\right)^{-1} \left(\begin{array}{c}
 \frac{\mu_1^\pi-\mu_2^\pi}{1-\tilde{r}}\\
 \frac{\mu_1^\pi+\mu_2^\pi}{1+\tilde{r}}
 \end{array}\right),
 \tilde{r} = -\left(1+\frac{\sigma_\pi^2\sigma_{L}^{-2}}{1-\rho^2}\right)^{-1} \frac{\rho\sigma_\pi^2\sigma_{L}^{-2}}{1-\rho^2};\\
 T\Sigma^{p}\left(\Sigma^L\right)^{-1} \boldsymbol{\mu}^L &= \left(1+\sigma_L^2\sigma_{\pi}^{-2}\right)^{-1} \left(\begin{array}{c}
 \frac{\mu_1^L-\mu_2^L}{1-\tilde{r}^*}\\
 \frac{\mu_1^L+\mu_2^L}{1+\tilde{r}^*}
 \end{array}\right), \tilde{r}^* = \frac{\rho \sigma_L^2\sigma_{\pi}^{-2}}{1+\sigma_L^2\sigma_{\pi}^{-2}}.
 \end{align*}
 Using notations in Lemma~\ref{lemma:bivariatenormal_plp}, we have $\mu_1^*-\mu_2^* = w_{\pi} \Delta_{\pi} + w_L \Delta_L$, where
 \begin{align*}
  \mu_1^*-\mu_2^* 
 &= w_{\pi} \Delta_{\pi} + w_L \Delta_L,
 w_{\pi}= \frac{(1-\rho)\sigma_{\pi}^{-2}}{(1-\rho)\sigma_{\pi}^{-2} +  \sigma_{L}^{-2}},  w_L = 1-w_{\pi}.
\end{align*}
 Therefore, $\min\{\Delta_{\pi}, \Delta_{L}\}\leq\mu_1^*-\mu_2^*\leq \max\{\Delta_{\pi}, \Delta_{L}\}$ if and only if
 \begin{equation*}
 \left([1-w_\pi]\Delta_{\pi}-w_L\Delta_L\right)\left([1-w_L]\Delta_{L}-w_{\pi}\Delta_{\pi}\right) \leq 0,
 \end{equation*}
 which is always true since $0\leq w_{\pi}\leq 1$. 
\end{proof}

\section{Proof of Proposition~\ref{proposition:2dimconstrast_hom_cov}}
\begin{proof}
We use the same notations as in Lemmas~\ref{lemma:bivariateNormaldiff} and~\ref{lemma:bivariatenormal_plp}. In this case, $$\left(\Sigma^\pi\right)^{-1} = \frac{\sigma_{\pi}^{-2}}{1-r^2} \left(\begin{array}{cc}
1 & -r\\ -r & 1
\end{array}\right)\quad \text{and}\quad \left(\Sigma^L\right)^{-1} = \frac{\sigma_{L}^{-2}}{1-\rho^2}\left(\begin{array}{cc}
1 & -\rho\\ -\rho & 1
\end{array}\right).$$
Consequently, we have
\begin{align*}
 \Sigma^{p}  &= \left[\frac{\sigma_{\pi}^{-2}}{1-r^2} + \frac{\sigma_L^{-2}}{1-\rho^2}\right]^{-1} \frac{1}{1-s^2} \left(\begin{array}{cc}
1 & -s\\ -s & 1
\end{array}\right), s = \frac{-\frac{\rho}{1-\rho^2}\sigma_L^{-2} -\frac{r}{1-r^2}\sigma_{\pi}^{-2}}{\frac{\sigma_{\pi}^{-2}}{1-r^2}+\frac{\sigma_L^{-2}}{1-\rho^2}}.\\
T\Sigma^{p} T^{\top} &=\left[\frac{\sigma_{\pi}^{-2}}{1-r^2} + \frac{\sigma_L^{-2}}{1-\rho^2}\right]^{-1} \frac{1}{1-s^2} \left(\begin{array}{cc}
2+2s & 0\\ 0 & 2-2s
\end{array}\right). \\
\left(T\Sigma^{p} T^{\top}\right)_{11} &=\left[\frac{\sigma_{\pi}^{-2}}{1-r^2} + \frac{\sigma_L^{-2}}{1-\rho^2}\right]^{-1} \frac{2}{1-s} = 2\left[\frac{\sigma_{\pi}^{-2}}{1-r} + \frac{\sigma_L^{-2}}{1-\rho}\right]^{-1}. 
\end{align*}
Furthermore, we have
\begin{align*}
T \Sigma^{p} \left(\Sigma^{\pi}\right)^{-1} \boldsymbol{\mu}^{\pi} & 
= \left(\frac{1}{1-r^2}+\frac{\sigma_\pi^2\sigma_{L}^{-2}}{1-\rho^2}\right)^{-1} \left(\begin{array}{c}
\frac{\mu_1^\pi-\mu_2^\pi}{(1-r)(1-s)}\\
\frac{\mu_1^\pi+\mu_2^\pi}{(1+r)(1+s)}
\end{array}\right);\\
T\Sigma^{p}\left(\Sigma^L\right)^{-1} \boldsymbol{\mu}^L &= \left(\frac{1}{1-\rho^2}+\frac{\sigma_L^2\sigma_{\pi}^{-2}}{1-r^2}\right)^{-1} \left(\begin{array}{c}
\frac{\mu_1^L-\mu_2^L}{(1-s)(1-\rho)}\\
\frac{\mu_1^L+\mu_2^L}{(1+s)(1+\rho)}
\end{array}\right).
\end{align*}
Therefore, from Lemma~\ref{lemma:bivariatenormal_plp}, the posterior distribution for $\theta_1 - \theta_2$ is $\mathcal{N}(\Delta^*, 2 \left[\frac{\sigma_{\pi}^{-2}}{1-r} + \frac{\sigma_L^{-2}}{1-\rho}\right]^{-1})$, where $\Delta^* = w_{\pi} \Delta_{\pi} + w_L \Delta_L$, 
\begin{align*}
w_{\pi}&= \frac{(1-\rho)\sigma_{\pi}^{-2}}{(1-\rho)\sigma_{\pi}^{-2} + (1-r) \sigma_{L}^{-2}}\quad \text{and}\quad w_L = 1-w_{\pi}.
\end{align*}
Therefore, $\min\{\Delta_{\pi}, \Delta_{L}\}\leq\Delta^*\leq \max\{\Delta_{\pi}, \Delta_{L}\}$ if and only if
\begin{equation*}
\left([1-w_\pi]\Delta_{\pi}-w_L\Delta_L\right)\left([1-w_L]\Delta_{L}-w_{\pi}\Delta_{\pi}\right) \leq 0,
\end{equation*}
which is always true since $0\leq w_{\pi}\leq 1$. Thus there is no DPP. 
\end{proof}

\section{Proof of Proposition~\ref{proposition:2dimconstrast_heter_uncor_cov}}
\begin{proof}
We use the same notations as in Lemmas~\ref{lemma:bivariateNormaldiff} and~\ref{lemma:bivariatenormal_plp}.
Since $\left(\Sigma^\pi\right)^{-1} =  \left(\begin{array}{cc}
s_{\pi}^{-2}  & 0\\ 0 & \sigma_{\pi}^{-2} 
\end{array}\right)$ and $\left(\Sigma^L\right)^{-1} = \left(\begin{array}{cc}
s_L^{-2} & 0\\ 0 & \sigma_L^{-2}
\end{array}\right)$, we have $
\Sigma^{p} = \left( \begin{array}{cc}
\left[s_{\pi}^{-2}+ s_L^{-2}\right]^{-1} & 0\\
 0 & \left[\sigma_{\pi}^{-2}+ \sigma_L^{-2}\right]^{-1}\end{array}\right)$. 
Therefore, $\left(T\Sigma^{p} T^{\top}\right)_{11} = \left[s_{\pi}^{-2}+ s_L^{-2}\right]^{-1} + \left[\sigma_{\pi}^{-2}+ \sigma_L^{-2}\right]^{-1}$. Furthermore, 
\begin{align*}
\left[T \Sigma^{p} \left(\Sigma^{\pi}\right)^{-1} \boldsymbol{\mu}^{\pi} \right]_1& 
= \frac{s_{\pi}^{-2}}{s_{\pi}^{-2} + s_{L}^{-2}} \mu_1^{\pi} - \frac{\sigma_{\pi}^{-2}}{\sigma_{\pi}^{-2} + \sigma_{L}^{-2}}\mu_2^{\pi};\\
\left[T\Sigma^{p}\left(\Sigma^L\right)^{-1} \boldsymbol{\mu}^L\right]_1 &= \frac{s_{L}^{-2}}{s_{\pi}^{-2} + s_{L}^{-2}} \mu_1^{L} - \frac{\sigma_{L}^{-2}}{\sigma_{\pi}^{-2} + \sigma_{L}^{-2}}\mu_2^{L}.
\end{align*}
Following Lemma~\ref{lemma:bivariatenormal_plp}, we have $\Delta^* = w_{\pi}^s \mu_1^{\pi} - w^{\sigma}\mu_2^{\pi} + (1-w_{\pi}^s) \mu_1^{L} - (1-w^{\sigma})\mu_2^{L}$,
where $w^{s} = \frac{s_{\pi}^{-2}}{s_{\pi}^{-2} + s_{L}^{-2}}$ and $w^{\sigma} = \frac{\sigma_{\pi}^{-2}}{\sigma_{\pi}^{-2} + \sigma_{L}^{-2}}$. The posterior distribution for $\theta_1 - \theta_2$ is
\begin{equation*}
\mathcal{N}\left(\Delta^*,\left[s_{\pi}^{-2}+ s_L^{-2}\right]^{-1} + \left[\sigma_{\pi}^{-2}+ \sigma_L^{-2}\right]^{-1} \right). 
\end{equation*}
Therefore, $\min\{\Delta_{\pi}, \Delta_{L}\}\leq\Delta^*\leq \max\{\Delta_{\pi}, \Delta_{L}\}$ if and only if $(\Delta^* - \Delta_{\pi})(\Delta^* - \Delta_{L})\leq 0$, i.e.
\begin{align*}
\left[w_{\pi}^s \mu_1^{\pi} - w^{\sigma}\mu_2^{\pi} + (1-w_{\pi}^s) \mu_1^{L} - (1-w^{\sigma})\mu_2^{L} - (\mu_1^{\pi} - \mu_2^{\pi})\right]\times\\
\left[w_{\pi}^s \mu_1^{\pi} - w^{\sigma}\mu_2^{\pi} + (1-w_{\pi}^s) \mu_1^{L} - (1-w^{\sigma})\mu_2^{L} - (\mu_1^{L} - \mu_2^{L})\right] \leq 0. \end{align*}
This is equivalent to, if we denote $\delta_1 = \mu_1^L - \mu_1^{\pi}$ and $\delta_2 = \mu_2^L - \mu_2^{\pi}$,
\begin{equation*}
w_{\pi}^s(1-w_{\pi}^s)  \left[ \delta_1 - \frac{1-w^{\sigma}}{1-w_{\pi}^s}\delta_2 \right]\times \left[\delta_1 - \frac{w^{\sigma}}{w_{\pi}^s } \delta_2 \right] \geq 0. 
\end{equation*}
In other words, $\min\{\Delta_{\pi}, \Delta_{L}\}\leq\Delta^*\leq \max\{\Delta_{\pi}, \Delta_{L}\}$ if and only if
\begin{align*}
\delta_1 \geq \max\left\{\frac{1-w^{\sigma}}{1-w_{\pi}^s} \delta_2,  \frac{w^{\sigma}}{w_{\pi}^s } \delta_2\right\}\quad \text{or} \quad \delta_1 \leq \min\left\{\frac{1-w^{\sigma}}{1-w_{\pi}^s} \delta_2,  \frac{w^{\sigma}}{w_{\pi}^s } \delta_2\right\}. 
\end{align*}
This holds if and only if $\mu_2^L= \mu_2^{\pi}$ or
\begin{align*}
\frac{\mu_1^L - \mu_1^{\pi}}{\mu_2^L - \mu_2^{\pi}} \geq \max\left\{\frac{1-w^{\sigma}}{1-w_{\pi}^s} ,  \frac{w^{\sigma}}{w_{\pi}^s }  \right\}, \quad \text{or} \quad \frac{\mu_1^L - \mu_1^{\pi}}{\mu_2^L - \mu_2^{\pi}} \leq \min\left\{\frac{1-w^{\sigma}}{1-w_{\pi}^s}  ,  \frac{w^{\sigma}}{w_{\pi}^s }  \right\}. 
\end{align*}
In the special case when $w^{\sigma}=w^{s}$, DPP does not occur since either inequality above holds.
\end{proof}

\section{Lemmas}

\begin{lemma}
Let $\boldsymbol{\theta}\sim \mathcal{N}(\boldsymbol{\mu},\Sigma)$, where $\boldsymbol{\mu} = (\mu_1,\mu_2)^{\top}$, $\Sigma = \left(\begin{array}{cc}
\Sigma_{11} & \Sigma_{12}\\
\Sigma_{21} & \Sigma_{22}
\end{array}\right)$. Then
\begin{equation*}
\theta_1 - \theta_2 \sim \mathcal{N}(\mu_1 - \mu_2, \Sigma_{11} - \Sigma_{12} - \Sigma_{21} + \Sigma_{22}).
\end{equation*}
\label{lemma:bivariateNormaldiff}
\end{lemma}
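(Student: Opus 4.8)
The plan is to realize $\theta_1-\theta_2$ as a linear functional of the Gaussian vector $\boldsymbol{\theta}$ and invoke the standard closure of the Gaussian family under affine maps. Concretely, I would set $T=(1,\,-1)$, so that $\theta_1-\theta_2 = T\boldsymbol{\theta}$, and then appeal to the fact that if $\boldsymbol{\theta}\sim\mathcal{N}(\boldsymbol{\mu},\Sigma)$ then $T\boldsymbol{\theta}\sim\mathcal{N}(T\boldsymbol{\mu},\,T\Sigma T^{\top})$ for any fixed row vector $T$.

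First I would compute the mean: $T\boldsymbol{\mu} = (1,\,-1)(\mu_1,\mu_2)^{\top} = \mu_1-\mu_2$. Next I would compute the variance by expanding $T\Sigma T^{\top}$; writing it out gives
\begin{equation*}
T\Sigma T^{\top} = (1,\,-1)\begin{pmatrix}\Sigma_{11}&\Sigma_{12}\\ \Sigma_{21}&\Sigma_{22}\end{pmatrix}\begin{pmatrix}1\\ -1\end{pmatrix} = \Sigma_{11}-\Sigma_{12}-\Sigma_{21}+\Sigma_{22},
\end{equation*}
which is the claimed variance. Combining the two computations with the affine-closure property yields $\theta_1-\theta_2\sim\mathcal{N}(\mu_1-\mu_2,\;\Sigma_{11}-\Sigma_{12}-\Sigma_{21}+\Sigma_{22})$.

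There is essentially no obstacle here: the only thing one might wish to be slightly careful about is the degenerate case in which $T\Sigma T^{\top}=0$ (for instance when $\Sigma$ is such that $\theta_1-\theta_2$ is almost surely constant), in which case the statement should be read with the convention that $\mathcal{N}(a,0)$ denotes the point mass at $a$; otherwise the variance is strictly positive and the density is the usual univariate Gaussian. If one prefers a self-contained argument rather than citing the affine-closure fact, the same conclusion follows immediately from the characteristic function: $\mathbb{E}\,e^{it(\theta_1-\theta_2)} = \mathbb{E}\,e^{i(tT)\boldsymbol{\theta}} = \exp\{it\,T\boldsymbol{\mu} - \tfrac{1}{2}t^2\,T\Sigma T^{\top}\}$, which is exactly the characteristic function of $\mathcal{N}(T\boldsymbol{\mu},T\Sigma T^{\top})$.
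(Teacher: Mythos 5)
Your proof is correct and follows essentially the same route as the paper, which also invokes closure of the Gaussian family under linear maps; the only cosmetic difference is that the paper uses the $2\times 2$ transformation $T=\bigl(\begin{smallmatrix}1 & -1\\ 1 & 1\end{smallmatrix}\bigr)$ and then reads off the first marginal, whereas you apply the row vector $(1,-1)$ directly. Both yield the same mean $\mu_1-\mu_2$ and variance $\Sigma_{11}-\Sigma_{12}-\Sigma_{21}+\Sigma_{22}$, so nothing further is needed.
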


\begin{proof}
Let $\tilde{\boldsymbol{\theta}} = (\tilde{\theta}_1, \tilde{\theta}_2)^{\top} = T\boldsymbol{\theta}$, where $T = \left(\begin{array}{cc}
1 & -1\\ 1 & 1
\end{array}\right)$. Then $\tilde{\theta}_1 = \theta_1 - \theta_2$ and
\begin{equation*}
\tilde{\boldsymbol{\theta}}\sim \mathcal{N}\left( T\boldsymbol{\mu}, T\Sigma T^{\top}\right) =  \mathcal{N}\left( \left(\begin{array}{c} 
\mu_1 - \mu_2\\ \mu_1+\mu_2
\end{array}\right), \left(\begin{array}{cc}
\Sigma_{11}-\Sigma_{21}-\Sigma_{12}+\Sigma_{22} & \Sigma_{11}-\Sigma_{22}\\
\Sigma_{11}-\Sigma_{22} & \Sigma_{11}+\Sigma_{21}+\Sigma_{12}+\Sigma_{22} 
\end{array}\right) \right).
\end{equation*}
\end{proof}

\begin{lemma}
Let the prior for a 2-dimensional parameter $\boldsymbol{\theta}$ be $\mathcal{N}(\boldsymbol{\mu}^{\pi}, \Sigma^{\pi})$ and the likelihood be proportional to $\phi(\boldsymbol{\theta}; \boldsymbol{\mu}^{L}, \Sigma^{L})$ where $\phi$ denotes Gaussian density. The prior distribution for $\theta_1 - \theta_2$ is
\begin{equation*}
 \mathcal{N}(\mu_1^{\pi} - \mu_2^{\pi}, \Sigma_{11}^{\pi} - \Sigma_{12}^{\pi} - \Sigma_{21}^{\pi} + \Sigma_{22}^{\pi}).
\end{equation*}
The marginal likelihood for $\theta_1-\theta_2$ is proportional to  
\begin{equation*}
\phi(\theta_1 - \theta_2; \mu_1^{L} - \mu_2^{L}, \Sigma_{11}^{L} - \Sigma_{12}^{L} - \Sigma_{21}^{L} + \Sigma_{22}^{L}).
\end{equation*}
The posterior distribution for $\boldsymbol{\theta}$ is $\mathcal{N} (\boldsymbol{\mu}^p, \Sigma^{p})$ where
\begin{equation*}
\Sigma^{p} =  \left[\left(\Sigma^{\pi}\right)^{-1} + \left(\Sigma^{L}\right)^{-1}\right]^{-1} \quad {\text and}\quad \boldsymbol{\mu}^p = \Sigma^{p} \left( \left(\Sigma^{\pi}\right)^{-1} \boldsymbol{\mu}^{\pi} + \left(\Sigma^{L}\right)^{-1} \boldsymbol{\mu}^{L} \right). 
\end{equation*}
The posterior distribution for $\theta_1-\theta_2$ is $\mathcal{N}({\mu}_1^{*} - \mu_2^{*}, (T\Sigma^{p}T^{\top})_{11})$, where $T = \left(\begin{array}{cc}
1 & -1\\ 1 & 1
\end{array}\right)$ and $(\cdot)_{11}$ denotes the $(1, 1)^{\rm th}$ element of a matrix. 
\label{lemma:bivariatenormal_plp}
\end{lemma}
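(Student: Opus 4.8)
The plan is to deduce every assertion from Lemma~\ref{lemma:bivariateNormaldiff} together with the standard Gaussian--Gaussian conjugacy computation, so that essentially no new calculation is needed beyond bookkeeping with the $2\times 2$ matrix $T$.

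First, the prior statement is immediate: applying Lemma~\ref{lemma:bivariateNormaldiff} to $\boldsymbol{\theta}\sim\mathcal{N}(\boldsymbol{\mu}^{\pi},\Sigma^{\pi})$ gives $\theta_1-\theta_2\sim\mathcal{N}(\mu_1^{\pi}-\mu_2^{\pi},\,\Sigma_{11}^{\pi}-\Sigma_{12}^{\pi}-\Sigma_{21}^{\pi}+\Sigma_{22}^{\pi})$. For the marginal likelihood I would pass to the coordinates $\tilde{\boldsymbol{\theta}}=T\boldsymbol{\theta}$ with $T$ as in the lemma statement. Since $L(\boldsymbol{\theta}\mid\mathrm{data})\propto\phi(\boldsymbol{\theta};\boldsymbol{\mu}^{L},\Sigma^{L})$ and $T$ is invertible with constant Jacobian, the likelihood viewed as a function of $\tilde{\boldsymbol{\theta}}$ is proportional to $\phi(\tilde{\boldsymbol{\theta}};T\boldsymbol{\mu}^{L},T\Sigma^{L}T^{\top})$; integrating out the second coordinate $\tilde{\theta}_2=\theta_1+\theta_2$ leaves a quantity proportional to $\phi\bigl(\tilde{\theta}_1;(T\boldsymbol{\mu}^{L})_1,(T\Sigma^{L}T^{\top})_{11}\bigr)$, and reading off $(T\boldsymbol{\mu}^{L})_1=\mu_1^{L}-\mu_2^{L}$ and $(T\Sigma^{L}T^{\top})_{11}=\Sigma_{11}^{L}-\Sigma_{12}^{L}-\Sigma_{21}^{L}+\Sigma_{22}^{L}$ (the latter is exactly the variance already computed in the proof of Lemma~\ref{lemma:bivariateNormaldiff}) yields the stated form.

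Next, the posterior for $\boldsymbol{\theta}$ is the textbook conjugate update: multiplying $\pi(\boldsymbol{\theta})\propto\exp\{-\tfrac12(\boldsymbol{\theta}-\boldsymbol{\mu}^{\pi})^{\top}(\Sigma^{\pi})^{-1}(\boldsymbol{\theta}-\boldsymbol{\mu}^{\pi})\}$ by $L(\boldsymbol{\theta}\mid\mathrm{data})\propto\exp\{-\tfrac12(\boldsymbol{\theta}-\boldsymbol{\mu}^{L})^{\top}(\Sigma^{L})^{-1}(\boldsymbol{\theta}-\boldsymbol{\mu}^{L})\}$ and completing the square in $\boldsymbol{\theta}$ gives a Gaussian with precision $(\Sigma^{\pi})^{-1}+(\Sigma^{L})^{-1}$ and mean $\Sigma^{p}\bigl((\Sigma^{\pi})^{-1}\boldsymbol{\mu}^{\pi}+(\Sigma^{L})^{-1}\boldsymbol{\mu}^{L}\bigr)$, i.e.\ the stated $(\boldsymbol{\mu}^{p},\Sigma^{p})$. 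Finally, applying Lemma~\ref{lemma:bivariateNormaldiff} once more, now to $\boldsymbol{\theta}\sim\mathcal{N}(\boldsymbol{\mu}^{p},\Sigma^{p})$, gives $\theta_1-\theta_2\sim\mathcal{N}(\mu_1^{p}-\mu_2^{p},(T\Sigma^{p}T^{\top})_{11})$; writing $\mu_i^{*}$ for $\mu_i^{p}$ finishes the proof.

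The only point requiring a little care---hardly an obstacle---is the interpretation of ``marginal likelihood'' in the second part: the likelihood is not a density in $\boldsymbol{\theta}$, but because it is proportional to one, the change of variables $\tilde{\boldsymbol{\theta}}=T\boldsymbol{\theta}$ and the subsequent integration over the nuisance direction $\theta_1+\theta_2$ are legitimate up to a multiplicative constant, which is all the statement asserts (and, since Gaussian profiling and marginalizing coincide up to a constant, the same expression would arise either way). Everything else is routine manipulation of the $2\times 2$ transformation $T$ and the matrix inverses already in play in the surrounding text.
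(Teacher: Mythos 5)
Your proposal is correct and matches the paper's route: the paper's own proof simply says the result ``follows directly from Lemma~\ref{lemma:bivariateNormaldiff},'' and your argument spells out exactly that reduction (apply the lemma to the prior and posterior, transform and marginalize the Gaussian-proportional likelihood, and use the standard conjugate update for $(\boldsymbol{\mu}^p,\Sigma^p)$). The extra care you take with the meaning of ``marginal likelihood'' is a reasonable elaboration of a step the paper leaves implicit, not a deviation in approach.
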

\begin{proof}
The proof follows directly from Lemma~\ref{lemma:bivariateNormaldiff}.
\end{proof}

\section{Proof of Lemma~\ref{lemma:asymptotic_likelihood}}
\label{appendix:proof_laq}

\begin{proof}
Let the likelihood be $L_n(\theta)$. The definition of local asymptotic quadratics gives that, under regularity conditions, there exists random vectors $S_n$ and random matrices $K_n$ that are functions of data ${\bf y}$ such that when $t\leq b$ for some positive constant $b$,
\begin{align*}
    \log\frac{L_n(\theta+\frac{t}{\sqrt{n}})}{L_n(\theta)} - \left(t^\top S_n -\frac{1}{2} t^\top K_n t\right)\rightarrow 0
\end{align*}
in $P_{\theta, n}$ probability. We replace $\theta$ with $\theta_0$ and let $t = \sqrt{n}(\theta-\theta_0)$ in the expression above. Thus we have
\begin{align*}
    \log\frac{L_n(\theta)}{L_n(\theta_0)} - \left(\sqrt{n} (\theta-\theta_0) S_n -\frac{n}{2} (\theta-\theta_0)^\top K_n (\theta-\theta_0)\right)\rightarrow 0
\end{align*}
in $P_{\theta_0, n}$ probability, with $|\theta-\theta_0| \leq \frac{b}{\sqrt{n}}$. The conclusion follows by re-arranging the terms. 
\end{proof}

\section{Proof of Proposition~\ref{proposition:postmode_partial_delta}}

\begin{proof}
Then the log-posterior is
\begin{align*}
\log p(p_0, \eta, r | \boldsymbol{y}, \boldsymbol{n}) &= y_0 \log(p_0) + (n_0-y_0) \log(1-p_0) + y_1 \log(p_0+\eta)\\
&+ (n_1-y_1) \log(1-p_0-\eta) + {\rm Const.}\\
&- \frac{1}{2 (1-r^2)} \left[ \frac{(p_0-\alpha_0)^2}{\sigma_0^2} - 2r \frac{(p_0-\alpha_0)}{\sigma_0}\frac{(\eta-\eta_0)}{\sigma_1} + \frac{(\eta-\eta_0)^2}{\sigma_1^2} \right].
\end{align*}
For any $r$, by setting the derivative of the logarithm of posterior to zero, the posterior mode $(p_0^*,\eta^*)$ satisfies
\begin{align*}
&I_0 \left(\frac{y_0}{n_0}-p_0\right)+ I_1\left(\frac{y_1}{n_1} - p_0-\eta\right) -\frac{1}{1-r^2}\left[ \frac{p_0-\alpha_0}{\sigma_0^2} -r\frac{\eta-\eta_0}{\sigma_0\sigma_1} \right] = 0,\\ 
&I_1\left(\frac{y_1}{n_1} - p_0-\eta\right) - \frac{1}{1-r^2}\left[ \frac{\eta-\eta_0}{\sigma_1^2} -r\frac{p_0-\alpha_0}{\sigma_0\sigma_1} \right] = 0.
\end{align*}
Re-arranging the terms gives
\begin{equation*}
\left(\begin{array}{cc}
 I_0 + \frac{\frac{1}{\sigma_0^2} + \frac{r}{\sigma_0\sigma_1}}{1-r^2} & - \frac{\frac{r}{\sigma_0\sigma_1} + \frac{1}{\sigma_1^2}}{(1-r^2)} \\
I_1 - \frac{r}{\sigma_0\sigma_1(1-r^2)} & I_1 +\frac{1}{\sigma_1^2 (1-r^2)}\\
\end{array}\right) 
\left(\begin{array}{c}
p_0\\ \eta
\end{array}\right) =  
\left(\begin{array}{c}
\frac{y_0}{n_0} I_0  -\frac{1}{1-r^2} \left[-\left(\frac{1}{\sigma_0^2} + \frac{r}{\sigma_0\sigma_1}\right)\alpha_0 + \left(\frac{r}{\sigma_0\sigma_1}+\frac{1}{\sigma_1^2}\right) \eta_0\right]\\
\frac{y_1}{n_1} I_1 -\frac{1}{1-r^2}\left[-\frac{\eta_0}{\sigma_1^2}+ \frac{\alpha_0 r}{\sigma_0\sigma_1}\right]
\end{array}\right).
\end{equation*}
Therefore, the solution to the above equation is
\footnotesize
\begin{align*}
\eta &= \frac{\left[I_0 + \frac{\frac{1}{\sigma_0^2} + \frac{r}{\sigma_0\sigma_1}}{1-r^2}\right]\left[\frac{y_1}{n_1} I_1 -\frac{1}{1-r^2}\left[-\frac{\eta_0}{\sigma_1^2}+ \frac{\alpha_0 r}{\sigma_0\sigma_1}\right]\right] - \left[I_1 - \frac{r}{\sigma_0\sigma_1(1-r^2)}\right]\left[\frac{y_0}{n_0} I_0  -\frac{-\left(\frac{1}{\sigma_0^2} + \frac{r}{\sigma_0\sigma_1}\right)\alpha_0 + \left(\frac{r}{\sigma_0\sigma_1}+\frac{1}{\sigma_1^2}\right) \eta_0}{1-r^2}\right]}{\left[ I_0 + \frac{\frac{1}{\sigma_0^2} + \frac{r}{\sigma_0\sigma_1}}{1-r^2}\right]\left[I_1 +\frac{1}{\sigma_1^2 (1-r^2)}\right]+\frac{\frac{r}{\sigma_0\sigma_1} + \frac{1}{\sigma_1^2}}{(1-r^2)}\left[I_1 - \frac{r}{\sigma_0\sigma_1(1-r^2)}\right]}\\
&= W_L \left(\frac{y_1}{n_1}-\frac{y_0}{n_0}\right)  + W_{\eta} \eta_0 +  W_{d} \left(\frac{y_0}{n_0} - \alpha_0\right),
\end{align*}
\normalsize
where $W_L = \frac{(1-r^2)I_0 I_1 + I_1[\frac{1}{\sigma_0^2} + \frac{r}{\sigma_0\sigma_1}]}{(1-r^2)I_0 I_1+ \frac{1}{\sigma_0^2\sigma_1^2 } + \frac{I_0}{\sigma_1^2}+ I_1 \left[\frac{1}{\sigma_0^2}+ 2\frac{r}{\sigma_0\sigma_1}+\frac{1}{\sigma_1^2}\right]}$, $W_{\eta} = 1-W_L$, and 
\begin{equation*}
W_d = \frac{I_1\left(\frac{1}{\sigma_0^2} + \frac{r}{\sigma_0\sigma_1}\right)+ \frac{r}{\sigma_0\sigma_1} I_0 }{(1-r^2)I_0 I_1+ \frac{1}{\sigma_0^2\sigma_1^2} + \frac{I_0}{\sigma_1^2}+ I_1\left[\frac{1}{\sigma_0^2}+ 2\frac{r}{\sigma_0\sigma_1}+\frac{1}{\sigma_1^2}\right]}.
\end{equation*}
Since $\frac{y_0}{n_0}\geq \alpha_0$, DPP occurs if and only if
\begin{align*}
\left[\eta - \hat{\eta}\right] (\eta - \eta_0) &= -W_{\eta}W_L \left[ (\eta_0-\hat{\eta}) + \frac{W_d}{W_{\eta}} \left(\frac{y_0}{n_0} - \alpha_0\right)\right] \left[(\eta_0-\hat{\eta}) - \frac{W_d}{W_L} \left(\frac{y_0}{n_0} - \alpha_0\right)\right]\geq 0\\
\Leftrightarrow & \eta_0 - \hat{\eta} = \eta_0 -\left(\frac{y_1}{n_1} -\frac{y_0}{n_0}\right) \in \left[ -  \frac{W_d}{W_{\eta}} \left(\frac{y_0}{n_0} - \alpha_0\right),  \frac{W_d}{W_L} \left(\frac{y_0}{n_0} - \alpha_0\right) \right]. 
\end{align*}
\end{proof}

\section{Proof of Proposition~\ref{proposition:binomial_logit}}

\begin{proof}
Let $\tilde{\xi}_i = \frac{\xi_i - \mu_i}{\sigma_i}$ for $i = 0, 1$. The logarithm of the posterior distribution of $\boldsymbol{\theta}$ is then given by
\begin{align*}
\log p(\boldsymbol{\xi}, r | \boldsymbol{y}, \boldsymbol{n}) & = \log \phi(\boldsymbol{\xi}; \boldsymbol{\mu}, \Sigma) + \log  L\left( {\rm logit}^{-1} (\xi_0), {\rm logit}^{-1} (\xi_1) \right) + {\rm Const}\\
& =  - \frac{\tilde{\xi}_0^2 - 2r \tilde{\xi}_0\tilde{\xi}_1 + \tilde{\xi}_1^2}{2 (1-r^2)}  -\frac{1}{2} \log \left[\sigma_1^2 \sigma_0^2 (1 - r^2)\right]+ {\rm Const}\\
&+ y_0 \xi_0 - n_0\log(1+\exp(\xi_0)) + y_1 \xi_1 - n_1\log(1+\exp(\xi_1)). 
\end{align*}
Then $(\boldsymbol{\xi}^*, r^*)$, maximizer of the log posterior, satisfies the following equations
\begin{align*}
&y_0 - n_0 p_0 -\frac{\phi_0}{\sigma_0} = 0, y_1 - n_1 p_1 +\frac{\phi_1}{\sigma_1} = 0, \phi_0\phi_1 = \frac{r}{1-r^2}, 
\end{align*}
where $\phi_0= \frac{\tilde{\xi}_0-r \tilde{\xi}_1}{1-r^2}$ and $\phi_1 = \frac{r\tilde{\xi}_0- \tilde{\xi}_1}{1-r^2}$. Consequently, at the mode,
\begin{align*}
&\left(\frac{y_0}{n_0} - p_0\right)\left(\frac{y_1}{n_1} - p_1\right)  = -\frac{r}{n_0 n_1\sigma_0\sigma_1 (1-r^2)}.
\end{align*}
\end{proof}

\section{Binomial Model Large Sample Asymptotic DPP}
\label{appendix:dpp_binomial}

By the De Moivre–Laplace theorem, as $n$ grows large, for $k$ in a neighbourhood of $np$, we can approximate the Binomial likelihood ${\rm Binomial}(k; n, p)$ as 
\begin{align*}
    \left(\begin{array}{c}
         n \\
         k 
    \end{array}\right) p^k (1-p)^{n-k} \approx \frac{1}{\sqrt{2\pi n\hat{p}(1-\hat{p})}} \exp\left[- \frac{(k-np)^2}{2n\hat{p}(1-\hat{p})}\right],\quad \hat{p}= k/n.
\end{align*}
Therefore, in the model $y_j\sim {\rm Binomial}(n_j, p_j)$ with independent priors $p_j \sim {\rm Beta}(a_j+1, b_j+1)$, $j=0, 1$; we have, the posterior is proportional to
\begin{align*}
    \exp\left[- \sum_{j=0}^{1}\frac{ (y_j-n_jp_j)^2}{2 y_j (1-y_j/n_j)}\right] \prod_{j=0}^{1} \left[p_j^{a_j} (1-p_j)^{b_j}\right]. 
\end{align*}
Therefore, the posterior mode for $\eta=p_1-p_0$ is given by
\begin{align*}
    {\eta}^p = \frac{y_0+a_0}{n_0+a_0+b_0} - \frac{y_1+a_1}{n_1+a_1+b_1}.
\end{align*}
We note that the prior mode and the MLE for $\eta$ are given by 
\begin{align*}
    \eta_0=\frac{a_0}{a_0+b_0} - \frac{a_1}{a_1+b_1}, \quad \hat{\eta}=\frac{y_0}{n_0} - \frac{y_1}{n_1}.
\end{align*}
It is easy to derive that the DPP occurs if and only if
\begin{align*}
    \frac{\frac{a_0}{a_0+b_0} - \frac{y_0}{n_0}}{\frac{a_1}{a_1+b_1} - \frac{y_1}{n_1}} \in \text{ the range of } \left\{\frac{1-w_1}{1-w_0},\frac{w_1}{w_0}\right\}, \quad \text{where}\\
    w_0 = \frac{n_0}{n_0+a_0+b_0},\quad w_1 = \frac{n_1}{n_1+a_1+b_1}.
\end{align*}
This result corresponds, in very similar analytical form, to Example~\ref{proposition:2dimconstrast_heter_uncor_cov} in the exponential quadratic case, despite that we are using Beta priors here instead of Gaussian priors. 
\end{appendix}
\end{document}